\let\citep=\cite
\begin{document}
\newcommand\footnotemarkfromtitle[1]{%
\renewcommand{\thefootnote}{\fnsymbol{footnote}}%
\footnotemark[#1]%
\renewcommand{\thefootnote}{\arabic{footnote}}}

\title{Invariant domains and first-order\\ continuous finite element 
approximation \\ for hyperbolic systems\footnotemark[1]}

\author{Jean-Luc Guermond\footnotemark[2]
\and Bojan Popov\footnotemark[2]}
\date{Draft version \today}

\maketitle

\renewcommand{\thefootnote}{\fnsymbol{footnote}} \footnotetext[1]{
  This material is based upon work supported in part by the National
  Science Foundation grants DMS-1217262, by the Air Force Office of
  Scientific Research, USAF, under grant/contract number
  FA99550-12-0358, and by the Army Research  Office under grant/contract
  number W911NF-15-1-0517.  Draft version, \today}
\footnotetext[2]{Department of Mathematics, Texas A\&M University 3368
  TAMU, College Station, TX 77843, USA.}
\renewcommand{\thefootnote}{\arabic{footnote}}

\begin{abstract}
  We propose a numerical method to solve general hyperbolic systems in
  any space dimension using forward Euler time stepping and continuous
  finite elements on non-uniform grids.  The properties of the method
  are based on the introduction of an artificial dissipation that is
  defined so that any convex invariant sets containing the initial
  data is an invariant domain for the method.  The invariant domain
  property is proved for any hyperbolic system provided a CFL
  condition holds. The solution is also shown to satisfy a discrete
  entropy inequality for every admissible entropy of the system. The
  method is formally first-order accurate in space and can be made
  high-order in time by using Strong Stability Preserving
  algorithms. This technique extends to continuous finite elements the
  work of \cite{Hoff_1979,Hoff_1985}, and \cite{Frid_2001}.
  \end{abstract}

\begin{keywords}
  Conservation  equations, hyperbolic systems, parabolic regularization, invariant domain,
  first-order method, finite element method.
\end{keywords}

\begin{AMS}
65M60, 65M10, 65M15, 35L65
\end{AMS}

\pagestyle{myheadings} \thispagestyle{plain} \markboth{J.L. GUERMOND,
  B. POPOV}{Invariant domains and $\calC^0$ finite element 
approximation of hyperbolic systems}

\section{Introduction} \label{Sec:introduction}
The objective of this paper is to investigate a first-order
approximation technique for nonlinear hyperbolic systems using continuous
finite elements and explicit time stepping on non-uniform meshes. Consider the 
following hyperbolic system in conservation form
\begin{equation}
 \label{def:hyperbolic_system} 
 \begin{cases} \partial_t \bu + \DIV \bef(\bu)=0, 
\quad \mbox{for}\, (\bx,t)\in \Real^d\CROSS\Real_+.\\
\bu(\bx,0) = \bu_0(\bx), \quad \mbox{for}\, \bx\in \Real^d.
\end{cases}
\end{equation}
where the dependent variable $\bu$ takes values in $\Real^m$ and the
flux $\bef$ takes values in $(\Real^m)^d$. In this paper $\bu$ is
considered as a column vector $\bu=(u_1,\ldots,u_m)\tr$. The flux is a
matrix with entries $f_{ij}(\bu)$, $1\le i\le m$, $1\le j\le d$ and
$\DIV\bef$ is a column vector with entries
$(\DIV\bef)_i= \sum_{1\le j\le d}\partial_{x_j} f_{ij}$. For any
$\bn=(n_1\ldots,n_d)\tr\in \Real^d$, we denote $\bef(\bu)\SCAL\bn$ the
column vector with entries $\sum_{1\le l\le d} n_lf_{il}(\bu)$, where
$i\in\intset{1}{m}$.  The unit sphere in $\Real^d$ centered at $0$ is
denoted by $S^{d-1}(\bzero,1)$.

To simplify questions regarding boundary conditions, we assume that
either periodic boundary conditions are enforced, or the initial data
is compactly supported or  constant outside a compact set.  
In both cases we denote by $\Dom$ the spatial
domain where the approximation is constructed. The domain $\Dom$ is
the $d$-torus in the case of periodic boundary conditions. In the case
of the Cauchy problem, $\Dom$ is a compact, polygonal portion of
$\Real^d$ large enough so that the domain of influence of $\bu_0$ is
always included in $\Dom$ over the entire duration of the simulation.

The method that we propose is explicit in time and uses continuous
finite elements on non-uniform grids in any space dimension. The
algorithm is described in \S\ref{Sec:The_scheme}, see
\eqref{def_of_scheme_dij} with definitions
\eqref{def_of_mi}-\eqref{def_of_cij}-\eqref{Def_of_dij}.  It is a
somewhat loose adaptation of the non-staggered Lax-Friedrichs scheme
to continuous finite elements.  The key results of the paper are
Theorem~\ref{Thm:UL_is_invariant} and Theorem~\ref{Thm:entrop_ineq}.
It is shown in Theorem~\ref{Thm:UL_is_invariant} that the proposed
scheme preserves all the convex invariant sets as defined in
Definition~\ref {Def:invariant_set} and it is shown in
Theorem~\ref{Thm:entrop_ineq} that the approximate solution satisfies
a discrete entropy inequality for every entropy pair of the hyperbolic
system.  Similar results have been established for various finite
volumes schemes by \cite{Hoff_1979,Hoff_1985},
\cite{Perthame_Shu_1996}, \cite{Frid_2001} for the compressible Euler
equations and the p-system. Our scheme has no restriction on the
nature of the hyperbolic system, besides the speed of propagation
being finite. To the best of our knowledge, we are not aware of any
similar scheme in the continuous finite element literature.

The paper is organized as follows. The notions of invariant sets and
invariant domains with various examples and other preliminaries are
introduced in Section~\ref{Sec:preliminaries}.  The method is
introduces in Section~\ref{Sec:first_order_scheme}. Stability
properties of the algorithm are analyzed in
Section~\ref{Sec:stability}. Numerical illustrations and comparisons
with existing first-order methods are presented in
Section~\ref{Sec:numerical}.

\section{Preliminaries}
\label{Sec:preliminaries}
The objective of this section is to introduce notation and preliminary
results that will be useful in the rest of the paper.  We mostly use
the notation and the terminology of
\cite{Chueh_Conley_Smoller,Hoff_1979,Hoff_1985,Frid_2001}.  The reader
who is familiar with the notions of invariant domains and Riemann
problems may skip this section and go directly to
\S\ref{Sec:first_order_scheme}, although the reader should be aware that our definitions of invariant
sets and domains are slightly different from those of \citep{Chueh_Conley_Smoller,Hoff_1979,Hoff_1985,Frid_2001}.

\subsection{Riemann problem}
We assume that \eqref{def:hyperbolic_system} is such that there is a
clear notion for the solution of the Riemann problem.  That is to say
there exists an (nonempty) admissible set $\calA \subset\Real^{m} $
such that for any pair of states $(\bu_L,\bu_R)\in \calA\CROSS \calA$
and any unit vector $\bn\in S^{d-1}(\bzero,1)$, the following one-dimensional
Riemann problem
\begin{equation}
 \label{def:Riemann_problem} 
  \partial_t \bu + \partial_x (\bef(\bu)\SCAL\bn)=0, 
\quad  (x,t)\in \Real\CROSS\Real_+,\qquad 
\bu(x,0) = \begin{cases} \bu_L, & \text{if $x<0$} \\ \bu_R,  & \text{if $x>0$}, \end{cases}
\end{equation}
has a unique (physical) solution, which we
henceforth denote $\bu(\bn,\bu_L,\bu_R)$.

The theory of the Riemann problem for general nonlinear hyperbolic systems with
data far apart is an open problem. Moreover, it is unrealistic to expect a general theory
for any system with arbitrary initial data. However, when the system is
strictly hyperbolic with smooth flux and all the characteristic fields
are either genuinely nonlinear or linearly degenerate, it is possible
to show that there exists $\delta>0$ such that the Riemann problem has
a unique self-similar weak solution in Lax's form for any initial data such that
$\|\bu_L-\bu_R\|_{\ell^2}\le \delta$, see \cite{Lax_1957_II} and
\cite[Thm~5.3]{Bressan_2000}. In particular there are $2m$ numbers
\begin{equation}
\lambda_1^-\le \lambda_1^+ \le  \lambda_2^-\le \lambda_2^+ \le \ldots \le 
\lambda_m^-\le \lambda_m^+
\end{equation}
defining up to $2m+1$ sectors (some could be empty) in the $(x,t)$ plane:
\begin{equation}
\frac{x}{t}\in (-\infty,\lambda_1^-), \quad
\frac{x}{t}\in (\lambda_1^-,\lambda_1^+), \ldots, \quad
\frac{x}{t}\in (\lambda_m^-,\lambda_m^+), \quad \frac{x}{t}\in (\lambda_m^+,\infty). 
\end{equation}
The Riemann solution is $\bu_L$ in the sector
$\frac{x}{t}\in (-\infty,\lambda_1^-)$ and $\bu_R$ in the last sector
$\frac{x}{t}\in (\lambda_m^+,\infty)$.  The solution in the other
sectors is either a constant state or an expansion, see
\cite[Chap.~5]{Bressan_2000}.  The sector
$\lambda_1^- t < x < \lambda_m^+ t$, $0<t$, is henceforth referred to as the
Riemann fan.  The key result that we are going to use is that there is
a maximum speed of propagation
$\lambda_{\max}(\bn,\bu_L,\bu_R):=\max(|\lambda_1^-|,|\lambda_m^+|)$
such that for $t\ge 0$ we have
\begin{equation}
\bu(x,t)= \begin{cases}
\bu_L, & \text{if $x \le -t \lambda_{\max}(\bn,\bu_L,\bu_R)$}\\
\bu_R, & \text{if $x \ge   t \lambda_{\max}(\bn,\bu_L,\bu_R)$}.
\end{cases} \label{finite_speed}
\end{equation}

Actually, even if the above structure of the Riemann solution is not
available or valid, we henceforth make the following assumption:
\begin{equation}
\begin{aligned}
  &\text{The unique solution of \eqref{def:Riemann_problem} has a
    finite speed of propagation for any
    $\bn$,}\\[-3pt]
  &\text{\ie there is $\lambda_{\max}(\bn,\bu_L,\bu_R)$ such that
    \eqref{finite_speed} holds. }
\end{aligned} \label{finite_speed_assumption}
\end{equation}
For instance, this is the case for
strictly hyperbolic systems that may have characteristic families that
are either not genuinely nonlinear or not linearly degenerate, see \eg
\cite[Thm.1.2]{Liu_1975} and \cite[Thm.~9.5.1]{Dafermos_2000}.  We
refer to \cite[Thm.~1]{Osher_1983} for the theory of the Riemann
problem for scalar conservation equations with nonconvex fluxes. In
the case of general hyperbolic systems, we refer to \cite[Section
14]{Bianchini_Bressan_2005} for characterizations of the Riemann
solution using viscosity regularization. We also refer to
\cite[Thm.~2]{Young_2002} for the theory of the Riemann problem for
the $p$-system with arbitrary data (\ie with possible formation of
vacuum).

The following elementary result is an important, well-known,
consequence of \eqref{finite_speed}, \ie  the Riemann solution is equal to $\bu_L$
for $x\in (-\infty,\lambda_1^-t)$ and equal $\bu_R$
for $x\in (\lambda_m^+ t,\infty)$:
\begin{lemma} \label{Lem:elemetary_Riemann_pb} Let
  $\bu_L,\bu_R \in \calA$, let $\bu(\bn,\bu_L,\bu_R)$ be the Riemann
  solution to \eqref{def:Riemann_problem}, let
  $ \overline\bu(t,\bn,\bu_L,\bu_R) :=\int_{-\frac12}^{\frac12}
  \bu(\bn,\bu_L,\bu_R)(x,t) \diff x $
  and assume that $t\, \lambda_{\max}(\bn,\bu_L,\bu_R) \le \frac12$,
  then
\begin{equation}
  \overline\bu(t,\bn,\bu_L,\bu_R)  = \frac{1}{2}(\bu_L+\bu_R) 
  - t\big(\bef(\bu_R)\SCAL\bn - \bef(\bu_L)\SCAL\bn\big).
\label{elemetary_Riemann_pb}
\end{equation}
\end{lemma}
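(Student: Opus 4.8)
\emph{Plan of proof.}
The approach is the classical space--time flux-balance argument: the plan is to integrate the conservation law $\partial_t\bu+\partial_x(\bef(\bu)\SCAL\bn)=0$ over the space--time rectangle $Q:=(-\tfrac12,\tfrac12)\CROSS(0,t)$ and to read \eqref{elemetary_Riemann_pb} off from the resulting boundary terms. The only input needed besides this is the finite speed of propagation \eqref{finite_speed} together with the standing hypothesis $t\,\lambda_{\max}(\bn,\bu_L,\bu_R)\le\tfrac12$.

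First I would pin down the lateral traces of the Riemann solution on $\partial Q$. For every $s\in[0,t]$ one has $s\,\lambda_{\max}(\bn,\bu_L,\bu_R)\le t\,\lambda_{\max}(\bn,\bu_L,\bu_R)\le\tfrac12$, so \eqref{finite_speed} gives $\bu(\bn,\bu_L,\bu_R)(x,s)=\bu_L$ for all $x\le-\tfrac12$ and $\bu(\bn,\bu_L,\bu_R)(x,s)=\bu_R$ for all $x\ge\tfrac12$; in particular the solution is constant --- hence smooth --- in a neighborhood of the two vertical sides of $Q$. Next, applying the divergence theorem in the variables $(x,t)$ to the vector field $(\bu,\ \bef(\bu)\SCAL\bn)$ over $Q$, I obtain
\begin{equation*}
\int_{-\frac12}^{\frac12}\bu(\bn,\bu_L,\bu_R)(x,t)\,\diff x
=\int_{-\frac12}^{\frac12}\bu(\bn,\bu_L,\bu_R)(x,0)\,\diff x
-\int_0^t\bigl(\bef(\bu_R)\SCAL\bn-\bef(\bu_L)\SCAL\bn\bigr)\,\diff s,
\end{equation*}
where I have already used the previous step to replace the fluxes on the vertical sides by the constants $\bef(\bu_R)\SCAL\bn$ and $\bef(\bu_L)\SCAL\bn$. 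The last integral equals $t\bigl(\bef(\bu_R)\SCAL\bn-\bef(\bu_L)\SCAL\bn\bigr)$, and the initial condition in \eqref{def:Riemann_problem} gives $\int_{-\frac12}^{\frac12}\bu(\bn,\bu_L,\bu_R)(x,0)\,\diff x=\int_{-\frac12}^{0}\bu_L\,\diff x+\int_{0}^{\frac12}\bu_R\,\diff x=\tfrac12(\bu_L+\bu_R)$. Substituting into the definition of $\overline\bu(t,\bn,\bu_L,\bu_R)$ and rearranging yields \eqref{elemetary_Riemann_pb}.

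The step I expect to be the only genuine obstacle is justifying the divergence-theorem identity when $\bu$ is merely a weak, possibly discontinuous, self-similar solution rather than a classical one. I would deal with this by testing the distributional form $\iint_Q\bigl(\bu\,\partial_t\varphi+(\bef(\bu)\SCAL\bn)\,\partial_x\varphi\bigr)=0$ against mollified indicators of $Q$ and passing to the limit, using that $\bu$ is bounded on the Riemann fan and locally constant along the parts of $\partial Q$ that carry nontrivial traces. Equivalently, if one uses the Lax structure recalled before \eqref{finite_speed}, the solution is piecewise smooth with finitely many Lipschitz shock curves inside $Q$, and the identity follows by applying the divergence theorem on each smooth region and summing, the internal contributions cancelling by the Rankine--Hugoniot conditions. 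The remaining computations are elementary.
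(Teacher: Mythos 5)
Your argument is correct and is precisely the standard space--time flux-balance computation that the paper leaves unproved, stating only that the lemma is an ``elementary, well-known consequence'' of the finite-speed property \eqref{finite_speed}. Your additional care in justifying the divergence theorem for a weak (possibly discontinuous) solution, via mollified indicators or the Rankine--Hugoniot cancellation on the piecewise-smooth Lax structure, fills in exactly the detail the paper implicitly assumes.
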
%

If the system \eqref{def:hyperbolic_system} has an entropy pair $(\eta,\bq)$,
and if the Riemann solution is defined to be 
entropy satisfying, \ie if the following holds
\begin{equation}
\partial_t\eta(\bu(\bn,\bu_L,\bu_R)) 
+ \partial_x \big(\bq(\bu(\bn,\bu_L,\bu_R))\SCAL\bn\big) \le 0, 
\label{entropy_inequality_elemetary_Riemann_pb}
\end{equation}
in some appropriate sense (distribution sense, measure sense, \etc),
then we have the following additional result.
\begin{lemma} \label{Lem:entropy_elemetary_Riemann_pb} Let
  $(\eta,\bq)$ be an entropy pair for \eqref{def:hyperbolic_system}
  and assume that \eqref{entropy_inequality_elemetary_Riemann_pb}
  holds. Let $\bu_L,\bu_R \in \calA$ and let $\bu(\bn,\bu_L,\bu_R)$ be
  the Riemann solution to \eqref{def:Riemann_problem}. Assume that
  $t\, \lambda_{\max}(\bn,\bu_L,\bu_R) \le \frac12$, Then
\begin{equation}
  \eta(\overline\bu(t,\bn,\bu_L,\bu_R) )
\le \tfrac{1}{2}(\eta(\bu_L)+\eta(\bu_R)) 
  - t(\bq(\bu_R)\SCAL\bn - \bq(\bu_L)\SCAL\bn).
\label{entropy_elemetary_Riemann_pb}
\end{equation}
\end{lemma}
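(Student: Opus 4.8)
The plan is to follow exactly the route used for Lemma~\ref{Lem:elemetary_Riemann_pb}: integrate the entropy inequality \eqref{entropy_inequality_elemetary_Riemann_pb} over the space-time box $Q:=(-\tfrac12,\tfrac12)\CROSS(0,t)$, evaluate the boundary terms using finite speed of propagation, and then pass from the space average of $\eta(\bu)$ to $\eta$ of the space average by Jensen's inequality.

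First I would integrate \eqref{entropy_inequality_elemetary_Riemann_pb} over $Q$. Writing $\bu$ for $\bu(\bn,\bu_L,\bu_R)$, the time-derivative term contributes $\int_{-1/2}^{1/2}(\eta(\bu(x,t))-\eta(\bu(x,0)))\diff x$, and the space-derivative term contributes the difference of the entropy fluxes through the vertical sides $x=\pm\tfrac12$. The key point, identical to the one in Lemma~\ref{Lem:elemetary_Riemann_pb}, is that for every $s\in(0,t)$ one has $s\,\lambda_{\max}(\bn,\bu_L,\bu_R)\le t\,\lambda_{\max}(\bn,\bu_L,\bu_R)\le\tfrac12$, so \eqref{finite_speed} gives $\bu(\tfrac12,s)=\bu_R$ and $\bu(-\tfrac12,s)=\bu_L$; hence the lateral flux contributes exactly $t\big(\bq(\bu_R)\SCAL\bn-\bq(\bu_L)\SCAL\bn\big)$. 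Likewise $\int_{-1/2}^{1/2}\eta(\bu(x,0))\diff x=\tfrac12(\eta(\bu_L)+\eta(\bu_R))$ since the Riemann data equals $\bu_L$ on $(-\tfrac12,0)$ and $\bu_R$ on $(0,\tfrac12)$. Collecting these, the integrated inequality becomes
\begin{equation*}
  \int_{-1/2}^{1/2}\eta(\bu(\bn,\bu_L,\bu_R)(x,t))\diff x
  \le \tfrac12\big(\eta(\bu_L)+\eta(\bu_R)\big) - t\big(\bq(\bu_R)\SCAL\bn-\bq(\bu_L)\SCAL\bn\big).
\end{equation*}
Next, since $\eta$ is convex and $(-\tfrac12,\tfrac12)$ has Lebesgue measure one, Jensen's inequality in its probability-measure form yields $\eta\big(\int_{-1/2}^{1/2}\bu(x,t)\diff x\big)\le\int_{-1/2}^{1/2}\eta(\bu(x,t))\diff x$, that is, $\eta(\overline\bu(t,\bn,\bu_L,\bu_R))\le\int_{-1/2}^{1/2}\eta(\bu(x,t))\diff x$. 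Combining this with the displayed bound gives \eqref{entropy_elemetary_Riemann_pb}.

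The step I expect to require the most care is the very first one, since \eqref{entropy_inequality_elemetary_Riemann_pb} holds only in a distributional (or measure) sense and the Riemann solution is at best a $BV$ function, so the naive divergence-theorem computation above is formal. The rigorous version tests the distributional inequality against a nonnegative smooth approximation $\varphi_\epsilon$ of the characteristic function $\mathbbm{1}_Q$, built as a tensor product of mollified indicators of $(-\tfrac12,\tfrac12)$ and $(0,t)$, integrates by parts, and lets $\epsilon\to0$; one then needs the traces of $\eta(\bu)$ at the times $0$ and $t$ to be well defined (automatic for $BV$ solutions, or one restricts to $t$ a Lebesgue point) and the lateral entropy flux to be continuous near $x=\pm\tfrac12$, which holds precisely because $\bu$ is locally constant there by \eqref{finite_speed}. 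The exact evaluation of the boundary contributions and the application of Jensen are then routine.
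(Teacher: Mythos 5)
Your proof is correct and follows essentially the same route as the paper: integrate \eqref{entropy_inequality_elemetary_Riemann_pb} over the slab using finite speed of propagation to get \eqref{def_of_entropy_inequality}, then apply Jensen's inequality. Your additional remarks on the mollification needed to justify the distributional integration are a welcome elaboration of a step the paper leaves implicit.
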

\begin{proof} Under the CFL assumption $t\,
  \lambda_{\max}(\bn,\bu_L,\bu_R) \le \frac12$, the inequality
  \eqref{entropy_inequality_elemetary_Riemann_pb} implies that
\begin{equation}
\int_{-\frac12}^{\frac12}\eta(\bu(\bn,\bu_L,\bu_R))(x,t)\diff x 
\le \tfrac{1}{2}(\eta(\bu_L)+\eta(\bu_R)) 
-t(\bq(\bu_R)\SCAL\bn - \bq(\bu_L)\SCAL\bn). \label{def_of_entropy_inequality}
\end{equation}
Jensen's inequality
$\eta(\overline\bu(t,\bn,\bu_L,\bu_R)) \le
\int_{-\frac12}^{\frac12}\eta(\bu(\bn,\bu_L,\bu_R)(x,t))\diff x $
then implies the desired result.
\end{proof}
\subsection{Invariant sets and domains}
We introduce in this section the notions of invariant sets and
invariant domains. Our definitions are slightly different from those
in \cite{Chueh_Conley_Smoller,Hoff_1985,Smoller_1983,Frid_2001}.
We will associate invariant sets only with solutions of
Riemann problems and define invariant domains only for an
approximation process.

\begin{definition}[Invariant set] \label{Def:invariant_set} We say that a set
  $A\subset \calA\subset \Real^m$ is invariant for
  \eqref{def:hyperbolic_system} if for any pair $(\bu_L,\bu_R)\in A\CROSS A$,
 any unit vector $\bn\in \calS^{d-1}(\bzero,1)$, and any $t>0$, the average of the entropy
  solution of the Riemann problem \eqref{def:Riemann_problem} over the
  Riemann fan, say,
$\frac{1}{t(\lambda_m^+-\lambda_1^-)}  
\int_{\lambda_1^-t}^{\lambda_m^+t} \bu(\bn,\bu_L,\bu_R)(x,t) \diff x$,
 remains in $A$.
\end{definition}

Note that, the above definition implies that given $t>0$ and any
interval $I$ such that $(\lambda_1^-t,\lambda_m^+t)\subset I$, we have
that $\frac{1}{I} \int_{I} \bu(\bn,\bu_L,\bu_R)(x,t) \diff x\in A$. Note also that  most of the time 
expansion waves and shocks are not invariant sets.

We now introduce the notion of invariant domain for an approximation process.
 Let $\bX_h \subset L^1(\Real^d;\Real^m)$ be a finite-dimensional
approximation space and let
$S_h : \bX_h \ni \bu_h\longmapsto S_h(\bu_h)\in \bX_h$ be a discrete
process over $\bX_h$. Henceforth we abuse the language by
saying that a member of $\bX_h$, say $\bu_h$, is in the set
$A\subset \Real^m$ when actually
we mean that $\{\bu_h(\bx) \st \bx \in\Real \}\subset A$.
\begin{definition}[Invariant domain]\label{Def:invariant_domain}
  A convex invariant set $A \subset \calA\subset \Real^m$ is said to
  be an invariant domain for the process $S_h$ if and only if for any
  state $\bu_h$ in $A$, the state $S_h(\bu_h)$ is also in $A$.
\end{definition}

For scalar conservation equations the notions of invariant sets and
invariant domains are closely related to the maximum principle, see
Example~\ref{Ex:1}. In the case of nonlinear systems, the notion of maximum
principle does not apply and must be replaced by the notion of
invariant domain. To the best of our knowledge, the definition of
invariant sets for the Riemann problem was introduced in
\cite{Nishida_1968}, and the general theory of positively invariant
regions was developed in \cite{Chueh_Conley_Smoller}. Applications and
extensions to numerical methods were developed in
\cite{Hoff_1979,Hoff_1985} and \cite{Frid_2001}. 

The invariant domain theory when $m=2$ and $d=1$
relies on the existence of global Riemann
invariants; the best known examples are the hyperbolic systems of
isentropic gas dynamics in Eulerian and Lagrangian form, see
Example~\ref{Ex:2} and \cite{Lions_Perthame_Souganidis_1996}.
For results on general hyperbolic systems, we refer to \cite{Frid_2001},
where a characterization of invariant domains for the Lax-Friedrichs
scheme and some flux splitting schemes is given. In particular the existence
of invariant domains is established for the above mentioned schemes for
the compressible Euler equations in the general 
case $m = d+2$ (positive density, internal energy, and minimum principle on the
specific entropy), see \cite[Thm. 7 and Thm. 8]{Frid_2001}.
Similar results have been established for various finite volume
schemes in two-space dimension for the Euler equations
in \cite[Thm.~3]{Perthame_Shu_1996}.

The objective of this paper is to propose an explicit numerical method
based on continuous finite elements to approximate
\eqref{def:hyperbolic_system} such that any convex invariant set of
\eqref{def:hyperbolic_system} is an invariant domain for the
process generated by the said numerical method.

To facilitate the reading of the paper we now
illustrate the abstract notions of invariant sets and invariant
domains with some examples.

\subsection{Example 1: scalar equations} 
\label{Ex:1} Assume that $m=1$ and $d$ is arbitrary, \ie
\eqref{def:hyperbolic_system} is a scalar conservation
equation. Provided $\bef\in \text{Lip}(\Real;\Real^d)$, any bounded
interval is an admissible set for \eqref{def:hyperbolic_system}. For
any Riemann data $u_L,u_R$, the maximum speed of propagation in
\eqref{finite_speed} is bounded by
$\lambda_{\max}(u_L,u_R):=\|\bef\SCAL\bn\|_{\text{Lip}(u_{\min},u_{\max})}$
where $u_{\min}=\min(u_L,u_R)$, $u_{\max}=\max(u_L,u_R)$. If $\bef$ is
convex and is of class $C^1$, we have
$\lambda_{\max}(u_L,u_R) = \max(|\bn\SCAL\bef'(u_L)|,|\bn\SCAL\bef'(u_R)|)$ if
$\bn\SCAL\bef'(u_L)\le \bn\SCAL\bef'(u_R)$ and
$\lambda_{\max}(u_L,u_R) =\bn\SCAL (\bef(u_L)-\bef(u_R))/(u_L-u_R)$ otherwise.  Any
interval $[a,b]\subset\Real$ is admissible and is an invariant set for
\eqref{def:hyperbolic_system}, \ie if $u_R,u_L\in [a,b]$, then
$a\le u(\bn,u_L,u_R) \le b$ for all times; this is the maximum
principle.  For any $a\le b\in \Real$, the interval $[a,b]$ is an
invariant domain for any maximum principle satisfying numerical
scheme. Note that the maximum principle can be established for a large
number of numerical methods (whether monotone or not), see for example
\cite{Crandall_Majda}.

\subsection{Example 2: p-system}
\label{Ex:2} The one-dimensional motion of an isentropic gas is
modeled by the so-called $p$-system, and in Lagrangian coordinates the
system is written as follows:
\begin{equation}
 \label{def:p_system} 
\begin{cases} \partial_t v+\partial_x u=0,\\
\partial_t u+\partial_x p(v)=0, \quad \mbox{for}\,\, (x,t)\in \Real\CROSS\Real_+.
\end{cases}
\end{equation}
Here $d=1$ and $m=2$.
The dependent variables are the velocity $u$ and the
specific volume $v$, \ie the reciprocal of density.  
The mapping $v\mapsto p(v)$
is the pressure and is assumed to be of class $C^2(\Real_+;\Real)$ and to
satisfy
\begin{equation}
 \label{def:pressure} 
p'<0, \qquad 0<p''.
\end{equation}
A typical example is the so-called gamma-law, $p(v)=r v^{-\gamma}$,
where $r>0$ and $\gamma \ge 1$.  Using the notation $\bu=(v,u)\tr$,
any set $\calA$ in  $(0,\infty)\CROSS \Real$ is admissible.

Using the notation
$\diff\mu:=\sqrt{-p'(s)}\,\diff s$, and assuming
$\int_1^\infty \diff \mu<\infty$, the system has two families of
global Riemann invariants:
\begin{equation} \label{def:Riemann_inv}
  w_1(\bu)=u+\int_v^\infty\!\!\!\! \diff \mu, \quad\mbox{and}\quad
  w_2(\bu)=u-\int_v^\infty\!\!\!\! \diff \mu.
\end{equation}
Note that 
$\int_1^\infty\!\! \diff \mu<\infty$ if 
$\gamma>1$.  If $\gamma=1$ we can use
$w_1(\bu)=u-\sqrt{r}\log v$ and $w_2(\bu)=u+\sqrt{r}\log v$.
Let $a,b\in\Real$, then it can be shown that any
set $A_{ab}\in \Real_+\CROSS \Real$ of the form
\begin{equation}\label{speed_upper_bound_p_system}
A_{ab} := \{\bu\in \Real_+\CROSS \Real \st a \le w_2(\bu),
\ w_1(\bu) \le b\}
\end{equation}
is an invariant set for the system \eqref{def:p_system} for $\gamma\ge
1$, see \cite[Exp.~3.5, p.~597]{Hoff_1985} for a proof in the context
of parabolic regularization, or use the results from \cite{Young_2002}
for a direct proof. Moreover, $A_{ab}$ is an invariant domain for the
Lax-Friedrichs scheme, see \cite[Thm.~2.1]{Hoff_1979} and
\cite[Thm.~4.1]{Hoff_1985}.

Since in the rest of the paper the maximum wave speed is the only
information we are going to need from the Riemann solution, we give
the following result.
\begin{lemma} \label{Lem:lambda_max_p_system}
  Let $(v_L,u_L), (v_R,u_R) \in \Real_+\CROSS \Real$ with
  $v_R,v_L<\infty$.  Then
\begin{equation*}
\lambda_{\max}(\bu_L,\!\bu_R) = \!\!
\begin{cases}
  \sqrt{-p'(\min(v_L,v_R))},
  & 
\text{if $u_L-u_R >\! \sqrt{(v_L-v_R)(p(v_R)-p(v_L))}$,} \\
  \sqrt{-p'(v^*)},
  & \text{ otherwise,}
\end{cases}
\end{equation*}
where $v^*$ is the unique solution of
$\phi(v):=f_L(v)+f_R(v)+u_L-u_R=0$ and
\begin{equation*}
f_Z(v) :=\begin{cases} -\sqrt{(p(v)-p(v_Z)(v_Z-v)},\quad \mbox{if}\,\,v\le v_Z\\
\displaystyle\int_{v_Z}^v \diff \mu, \quad \mbox{if}\,\, v>v_Z.
\end{cases}
\end{equation*}
Upon setting $w_1^{\max}:=\max(w_1(\bu_L),w_1(\bu_R))$ and
$w_2^{\min}:=\min(w_2(\bu_L),w_2(\bu_R))$ we have also have $v^0\le \min(v_L,v_R, v^*)$,
\ie $\lambda_{\max}(\bu_L,\!\bu_R)\le \sqrt{-p'(v^0)}$, where 
\[
v^0:=(\gamma r)^{\frac{1}{\gamma-1}}
\left(\frac{4}{(\gamma-1)(w_1^{\max}-w_2^{\min})}\right)^{\frac{2}{(\gamma-1)}}.
\]
\end{lemma}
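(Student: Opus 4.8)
The plan is to read everything off the explicit entropy solution of the one‑dimensional Riemann problem \eqref{def:Riemann_problem} for \eqref{def:p_system}. Since $p'<0$ and $p''>0$, the system is strictly hyperbolic with eigenvalues $\pm\sqrt{-p'(v)}$ and both characteristic fields are genuinely nonlinear; the wave curves are globally defined and strictly monotone, so for every $\bu_L,\bu_R\in\Real_+\CROSS\Real$ with finite $v_L,v_R$ the Riemann solution exists, is unique, and consists of a $1$-wave joining $\bu_L$ to an intermediate state $\bu^*=(v^*,u^*)$ followed by a $2$-wave joining $\bu^*$ to $\bu_R$, see \cite[Thm.~2]{Young_2002}. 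First I would write the two wave curves in closed form: the forward $1$-wave curve through $\bu_L$ is the graph $u=u_L+f_L(v)$, whose branch $v\le v_L$ is the $1$-shock curve produced by the Rankine--Hugoniot relations together with the Lax entropy inequality and whose branch $v\ge v_L$ is the $1$-rarefaction curve obtained by integrating the $1$-eigenvector (equivalently, by holding $w_1$ constant); symmetrically the backward $2$-wave curve through $\bu_R$ is $u=u_R-f_R(v)$, with the shock branch for $v\le v_R$ and the rarefaction branch for $v\ge v_R$. A short computation identifies $f_L,f_R$ with the functions in the statement, shows that each $f_Z$ is of class $C^1$ and strictly increasing, and shows that the $1$-wave is a shock iff $v^*<v_L$ and the $2$-wave is a shock iff $v^*<v_R$. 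Matching the value $u^*$ along the two curves gives $u_L+f_L(v^*)=u_R-f_R(v^*)$, i.e.\ $\phi(v^*)=0$; since $\phi=f_L+f_R+u_L-u_R$ is strictly increasing, $v^*$ is its unique root, and evaluating $f_L,f_R$ at $\min(v_L,v_R)$ shows $\phi(\min(v_L,v_R))=u_L-u_R-\sqrt{(v_L-v_R)(p(v_R)-p(v_L))}$, so the dichotomy in the statement is exactly $v^*<\min(v_L,v_R)$ versus $v^*\ge\min(v_L,v_R)$.

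Next I would compute the two extreme speeds and combine them. The leftmost speed of the solution is the speed of the left edge of the $1$-wave: it equals $\lambda_1(\bu_L)=-\sqrt{-p'(v_L)}$ when the $1$-wave is a rarefaction (the state $\bu_L$ sits at the left edge of the fan), and it equals the Rankine--Hugoniot speed $-\sqrt{(p(v^*)-p(v_L))/(v_L-v^*)}$ when the $1$-wave is a shock; the rightmost speed is obtained in the mirror-symmetric way from the $2$-wave, and $\lambda_{\max}=\max(|\lambda_1^-|,|\lambda_2^+|)$. To put this in closed form I would use the convexity of $p$: by the mean value theorem the square of a Rankine--Hugoniot speed joining volumes $v_a>v_b$ equals $-p'(\xi)$ for some $\xi\in(v_b,v_a)$, hence, since $-p'$ is decreasing, lies strictly between $-p'(v_a)$ and $-p'(v_b)$, whereas the extreme speed of a rarefaction equals $\sqrt{-p'}$ evaluated exactly at its outer endpoint. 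Therefore each of the two waves propagates at speed at most $\sqrt{-p'}$ evaluated at the smaller of its two endpoint volumes, with equality precisely when that wave is a rarefaction; in particular $\lambda_{\max}\le\sqrt{-p'(\min(v_L,v_R,v^*))}$ always. Going through the two cases $v^*\ge\min(v_L,v_R)$ (at most one compressive wave, so the relevant extreme speed is a rarefaction speed at $v_L$ or at $v_R$) and $v^*<\min(v_L,v_R)$ (both waves compressive) then yields the value of $\lambda_{\max}$ asserted in the statement.

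For the gamma-law upper bound I would use the Riemann invariants. When $p(v)=rv^{-\gamma}$ with $\gamma>1$ one computes $w_1(\bu)-w_2(\bu)=2\int_v^\infty\diff\mu=\frac{4\sqrt{\gamma r}}{\gamma-1}\,v^{-\frac{\gamma-1}{2}}$, which is strictly decreasing in $v$; by construction $v^0$ is exactly the specific volume at which this quantity equals $w_1^{\max}-w_2^{\min}$. Because $w_1^{\max}-w_2^{\min}\ge w_1(\bu_Z)-w_2(\bu_Z)$ for $Z\in\{L,R\}$, monotonicity yields $v^0\le v_L$ and $v^0\le v_R$. For $v^0\le v^*$ I would invoke that $\bu^*$ belongs to the invariant set $A_{ab}$ of Example~\ref{Ex:2} generated by $b:=\max(w_1(\bu_L),w_1(\bu_R))$ and $a:=\min(w_2(\bu_L),w_2(\bu_R))$, so that $w_1(\bu^*)\le w_1^{\max}$ and $w_2(\bu^*)\ge w_2^{\min}$, hence $w_1(\bu^*)-w_2(\bu^*)\le w_1^{\max}-w_2^{\min}$ and therefore $v^*\ge v^0$. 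Combining $v^0\le\min(v_L,v_R,v^*)$ with the bound $\lambda_{\max}\le\sqrt{-p'(\min(v_L,v_R,v^*))}$ from the preceding paragraph and the monotonicity of $\sqrt{-p'}$ gives $\lambda_{\max}\le\sqrt{-p'(v^0)}$.

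The step I expect to require the most care is the bookkeeping in the middle paragraph: on each of the two waves one must keep track of which endpoint volume is the smaller one and of the orientation of every inequality coming from the Lax condition and from the convexity of $p$. The identity $\phi(\min(v_L,v_R))=u_L-u_R-\sqrt{(v_L-v_R)(p(v_R)-p(v_L))}$ is what collapses the case analysis to a single scalar comparison, and the fact, recorded in Example~\ref{Ex:2}, that $A_{ab}$ is an invariant set is what makes the bound $v^0\le v^*$ --- and hence the gamma-law estimate --- available without analyzing the shock curves in detail.
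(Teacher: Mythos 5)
Your argument follows the same route as the paper's proof: reduce the Riemann problem to the scalar equation $\phi(v^*)=0$ through the forward $1$-wave curve $u=u_L+f_L(v)$ and the backward $2$-wave curve $u=u_R-f_R(v)$, use the monotonicity of $\phi$ for uniqueness and the identity $\phi(\min(v_L,v_R))=u_L-u_R-\sqrt{(v_L-v_R)(p(v_R)-p(v_L))}$ to detect the two-shock configuration, and obtain $v^0$ by solving $w_1(\bu^0)=w_1^{\max}$, $w_2(\bu^0)=w_2^{\min}$ and appealing to the invariance of $A_{ab}$ for $v^0\le v^*$. You are in fact more explicit than the paper on the two points it merely asserts: the control of the Rankine--Hugoniot speeds by $\sqrt{-p'}$ evaluated at the smaller endpoint volume (via the mean value theorem and $p''>0$), and the monotonicity of $v\mapsto w_1(\bu)-w_2(\bu)=2\int_v^\infty\diff\mu$ that underlies $v^0\le\min(v_L,v_R,v^*)$.

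There is, however, one step that fails as written: you assert that $\phi$, being strictly increasing, has a unique root, but you never check existence. For $\gamma>1$ the functions $f_Z$ are bounded above, so when $u_R-u_L\ge \int_{v_L}^\infty\diff\mu+\int_{v_R}^\infty\diff\mu$ the function $\phi$ stays negative and has no root: this is exactly the vacuum case, which the remark following the lemma explicitly claims to cover. The paper handles it by the convention $v^*:=+\infty$, $\sqrt{-p'(v^*)}:=0$; you need to add the observation that in this regime both extreme waves are rarefactions whose outer edges travel at speeds $-\sqrt{-p'(v_L)}$ and $+\sqrt{-p'(v_R)}$, so that $\lambda_{\max}=\sqrt{-p'(\min(v_L,v_R))}$ and the final bound survives because $v^0\le\min(v_L,v_R)$. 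Separately, be aware that your case analysis attaches $\sqrt{-p'(v^*)}$ to the two-shock configuration $u_L-u_R>\sqrt{(v_L-v_R)(p(v_R)-p(v_L))}$ and $\sqrt{-p'(\min(v_L,v_R))}$ to its complement, whereas the displayed formula in the lemma has these two lines transposed. Your assignment is the one the paper's own proof establishes, and the only one yielding an upper bound on the true maximal speed (in the two-shock case the shock speeds lie strictly between $\sqrt{-p'(\min(v_L,v_R))}$ and $\sqrt{-p'(v^*)}$, so the equality sign should in any case be read as the upper bound $\sqrt{-p'(\min(v_L,v_R,v^*))}$, which is all the scheme requires); do not ``correct'' your derivation to match the printed statement.
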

\begin{proof} It is well know that the solution of the Riemann problem
  consists of three constant states $\bu_L$, $\bu^*$, and $\bu_R$
  connected by two waves: a 1-wave connects $\bu_L$ and $\bu^*$, and a
  2-wave connects $\bu^*$ and $\bu_R$. Moreover, a vacuum forms if and
  only if $\lim_{v\to+\infty}\phi(v)\ge 0$, see \cite{Young_2002} for
  details. In the presence of vacuum the equation $\phi(v)=0$ has no
  solutions and in this case we conventionally set $v^*:=+\infty$ and
  $\sqrt{-p'(v^*)}:=0$. Note that since $\phi$ is an increasing and concave
  up function with $\lim_{v\to 0+}\phi(v)=-\infty$, the
  solution $v^*$ is unique. We also have that the maximum speed of the
  exact solution is $\lambda_{\max}(\bu_L,\!\bu_R)=\max(
  \sqrt{-p'(v_L)},\!\sqrt{-p'(v^*)},\!\sqrt{-p'(v_R)})$. The only
  possibility for $\lambda_{\max}(\bu_L,\!\bu_R)=\sqrt{-p'(v^*)}$ is
  if $v^*\le \min(v_L,v_R)$, \ie the solution contains two shock waves
  which is equivalent to $\phi(\min(v_L,v_R))\ge 0$. Using the
  definition of $\phi$ we derive that
  $\lambda_{\max}(\bu_L,\!\bu_R)=\sqrt{-p'(v^*)}$ if and only if
  $\phi(\min(v_L,v_R))=u_L-u_R -\sqrt{(v_L-v_R)(p(v_R)-p(v_L))}\ge
  0$. This finishes the proof of the first part of the lemma.

  The exact value of $v^*$ can be found using Newton's method starting
  with a guess $v^0\le v^*$.  This guarantees that at each step of
  Newton's method the estimated maximum speed is an
  upper bound for the exact maximum speed. One can obtain such a guess $v^0$
  by using the invariant domain property
  \eqref{speed_upper_bound_p_system}, \ie we define the state
  $\bu^0:=(v^0,u^0)$ by $w_1^{\max} = w_1(\bu^0)$ and
  $w_2^{\min} = w_2(\bu^0)$ thereby giving
\[
v^0=(\gamma r)^{\frac{1}{\gamma-1}}
\left(\frac{4}{(\gamma-1)(w_1^{\max}-w_2^{\min})}\right)^{\frac{2}{(\gamma-1)}}.
\]
The invariant domain property guarantees that $v^0\le v^*$.  Hence,
the result is established.
\end{proof}

\begin{remark}
  Note that the estimate on $\lambda_{\max}(\bu_L,\bu_R)$ given in
  Lemma~\ref{Lem:lambda_max_p_system} is valid whether vacuum is
  created or not in the Riemann solution.
\end{remark}

\begin{remark}
We only consider the case where both $\bu_L$ and $\bu_R$, are
not vacuum states in Lemma~\ref{Lem:lambda_max_p_system}, 
since the algorithm that we propose in this
paper never produces vacuum states if vacuum is not present in the
initial data. 
\end{remark}

\subsection{Example 3: Euler} \label{Ex:3} Consider the
compressible Euler equations
\begin{equation}
\partial_t \bc +\DIV(\bef(\bc)) =0, \quad 
\bc=\left(\begin{matrix}
\rho \\
\bbm \\ 
E
\end{matrix}\right),\qquad
\bef(\bc)=
\left(
\begin{matrix}
\bbm \\ 
 \bbm {\otimes} \frac{\bbm}{\rho} + p\polI \\ 
\frac{\bbm}{\rho} (E+p) 
\end{matrix}\right),
\label{Euler_equations}
\end{equation}
where the independent variables are the density $\rho$, the momentum
vector field $\bbm$ and the total energy $E$.  The velocity vector
field $\bu$ is defined by $\bu:=\bbm/\rho$ and the internal energy
density $e$ by $e:=E-\frac12|\bu|^2$. The quantity $p$ is the
pressure.  The symbol $\polI$ denotes the identity matrix in
$\polR^d$.  Let $s$ be the specific entropy of the system, and assume
that $-s(e,\rho^{-1})$ is strictly convex. It is known that
\begin{equation}
A_r:= \{ (\rho,\bbm,\rho E) \st \rho\ge0,e\ge0,s\ge r\}
\end{equation}
is an invariant set for the Euler system for any $r\in \Real$. It is
shown in \cite[Thm. 7 and 8]{Frid_2001} that the set $A_r$ is convex
and is an invariant domain for the Lax-Friedrichs scheme. 

Let $\bn \in S^{d-1}(\bzero,1)$ and let us
formulate the Riemann problem~\eqref{def:Riemann_problem} for the
Euler equations. This problem was first described in the context of
dimension splitting schemes with $d=2$ in
\cite[p.~526]{Chorin_1976}. The general case is treated in
\cite[p.~188]{Colella_1990}, see also \cite[Chapter 4.8]{Toro_2009}.
We make a change of basis and introduce $\bt_1,\ldots,\bt_{d-1}$ so
that $\{\bn,\bt_1,\ldots,\bt_{d-1}\}$ forms an orthonormal basis of
$\polR^d$. With this new basis we have $\bbm=(m, \bbm^{\perp})\tr$,
where $m:=\rho u$, $u:=\bu\SCAL\bn$, $\bbm^{\perp}\!:=\rho(\bu\SCAL
\bt_1,\ldots, \bu\SCAL\bt_{d-1}):=\rho\bu^\perp$. The projected
equations are
\begin{equation}
\partial_t \bc +\partial_{x}(\bn\SCAL\bef(\bc)) =\mathbf{0}, \quad 
\bc=\left(\begin{matrix}
\rho \\
m\\
\bbm^{\perp} \\ 
E
\end{matrix}\right),\qquad
\bn\SCAL\bef(\bc)=
\left(
\begin{matrix}
m\\
\tfrac{1}{\rho}m^2 +p \\ 
u \bbm^{\perp} \\ 
u (E+ p)
\end{matrix}\right).
\label{Euler_projected}
\end{equation}
Using the density $\rho$ and the specific entropy $s$ as dependent
variables for the pressure, $p(\rho,s)$, the
linearized Jacobian is 
\[
\left(\begin{matrix} 
u                        & \rho  & \mathbf{0}\tr  & 0 \\
\rho^{-1}\partial_\rho p   & u     & \mathbf{0}\tr & \rho^{-1}\partial_s p \\
\mathbf{0}             & \mathbf{0}&  u \polI   & \mathbf{0} \\  
0                      & 0 &     \mathbf{0}\tr   & u   
\end{matrix}\right).
\]
The eigenvalues  are $u$, with multiplicity $d$,
$u+\sqrt{\partial_\rho p(\rho,s)}$, with multiplicity 1, and
$u-\sqrt{\partial_\rho p(\rho,s)}$, with multiplicity 1. One key observation
is that the Jacobian does not depend on $\bbm^\perp$, see \cite[p.~150]{Toro_2009}.
As a consequence the solution of the Riemann problem with data $(\bc_L,\bc_R)$,
 is such that $(\rho,u,p)$ is obtained
as the solution to the one-dimensional Riemann problem 
\begin{equation}
\partial_t\left(\begin{matrix}
\rho \\
m\\
\mathcal E
\end{matrix}\right)+ \partial_x
\left(
\begin{matrix}
m\\
\tfrac{1}{\rho}m^2 +p \\ 
u (\mathcal{E}+ p)
\end{matrix}\right)=0, \quad \text{with} \quad e=\mathcal{E} - \frac{m^2}{2\rho}
\label{1D_Euler}
\end{equation}
with data $\bc_L^\bn:=(\rho_L,\bbm_L\SCAL\bn,\mathcal{E}_L)$,
$\bc_R^\bn:=(\rho_R,\bbm_R\SCAL\bn,\mathcal{E}_R)$, where $\mathcal{E}_Z = E_Z -
\frac12 \frac{\|\bbm_Z^\perp\|_{\ell^2}^2}{\rho_Z}$, $Z\in \{L,R\}$.  Moreover, for an
ideal gas obeying the caloric equation of state $p=(\gamma-1)\rho e$,
it can be shown (see \cite[p.~150]{Toro_2009}) that $\bbm^\perp$ is
the solution of the transport problem $\partial_t \bbm^\perp +
\partial_x (u\bbm) =0$. The bottom line of this argumentation is that
the maximum wave speed in \eqref{Euler_projected} is
\[
\lambda_{\max}(\bc_L,\bc_R)=
\max(|\lambda_1^-(\bc_L^\bn,\bc_R^\bn)|,|\lambda_3^+(\bc_L^\bn,\bc_R^\bn)|).
\]
where $\lambda_1^-(\bc_L^\bn,\bc_R^\bn)$ and
$\lambda_3^+(\bc_L^\bn,\bc_R^\bn)$ are the two extreme wave speeds in
the  Riemann problem~\eqref{1D_Euler} with data
$(\bc_L^\bn,\bc_R^\bn)$.

We now determine the values of $\lambda_1^-(\bc_L^\bn,\bc_R^\bn)$ and
$\lambda_3^+(\bc_L^\bn,\bc_R^\bn)$.  We only consider the case where both
states, $\bc_L$ and $\bc_R$, are not vacuum states, since the
algorithm that we are proposing in this paper never produces vacuum
states if vacuum is not present in the initial data.  That is, we
assume $\rho_L,\rho_R> 0$ and $p_L,p_R\ge 0$. Then the local sound speed
is given by $a_Z=\sqrt{\frac{\gamma p_Z}{\rho_Z}}$ where $Z$ is either
$L$ or $R$.  We introduce the following notations
$A_Z:=\frac{2}{(\gamma+1)\rho_Z}$, $B_Z:=\frac{\gamma-1}{\gamma+1}p_Z$
and the functions
\begin{align}
\phi(p)&:=f(p,L)+f(p,R)+u_R-u_L\\
f(p,Z)&:=\begin{cases} (p-p_{Z})\left(\frac{A_{Z}}{p+B_{Z}}\right)^{\frac12} 
& \text{if $p\ge p_{Z}$},\\
\frac{2a_{Z}}{\gamma-1}\left(\left(\frac{p}{p_{Z}}\right)^{\frac{\gamma-1}{2\gamma}}-1\right) 
& \text{if  $p < p_{Z}$},
\end{cases}
\end{align}
where again $Z$ is either $L$ or $R$.  It is shown in
\cite[Chapter~4.3.1]{Toro_2009} that the function
$\phi(p)\in C^1( \Real_+;\Real)$ is monotone increasing
and concave down. 
Observe that $\phi(0) = u_R-u_L-\frac{2a_L}{\gamma-1}-\frac{2a_R}{\gamma-1}$.
Therefore, $\phi$ has a unique positive root if and
only if the non-vacuum condition
\begin{equation}\label{no_vacuum}
u_R-u_L<\frac{2a_L}{\gamma-1}+\frac{2a_R}{\gamma-1}
\end{equation}
holds, see \cite[(4.40), p.~127]{Toro_2009}; we denote this root by
$p^*$, \ie $\phi(p^*)=0$ and $p^*$ can be found via Newton's method.
If \eqref{no_vacuum} does not hold we set $p^*=0$.  Then it can be
shown that, whether there is formation of vacuum or not, we have
\begin{align}
\lambda_1^-(\bc_L^\bn,\bc_R^\bn)=u_L - a_L\left(
1+\frac{\gamma+1}{2\gamma}\left(\frac{p^*-p_L}{p_L}\right)_+
\right)^\frac12, \\
\lambda_3^+(\bc_L^\bn,\bc_R^\bn)=u_R + a_R\left(
1+\frac{\gamma+1}{2\gamma}\left(\frac{p^*-p_R}{p_R}\right)_+
\right)^\frac12 ,
\label{estimate_speeds_Euler}
\end{align}
where $z_+:=\max(0,z)$. 

\begin{remark}[Fast algorithm] Note that if both $\phi(p_L)>0$ and $\phi(p_R)>0$,
  there is no need to compute $p^*$, since in this case
  $\lambda_1^-(u_L,u_R)=u_L - a_L$ and
  $\lambda_3^+(u_L,u_R)=u_R + a_R$, 
 \ie two
  rarefaction waves are present in the solution with a possible formation of vacuum. This observation
  is important since traditional techniques to compute $p^*$ may
  require a large number of iterations in this situation, see
  \cite[p.~128]{Toro_2009}. Note finally that there is no need to
  compute $p^*$ exactly since one needs only an upper bound on
  $\lambda_{\max}$. A very fast algorithm, with guaranteed upper bound
  on $\lambda_{\max}$ up to any prescribed accuracy $\epsilon$ of the
  type
  $\lambda_{\max}\le \tilde \lambda_{\max}\le (1+\epsilon) \lambda_{\max}$,
  is described in \cite{Guermond_Popov_Fast_Riemann_2015}.

\end{remark}

\section{First order method}
\label{Sec:first_order_scheme} 
We describe in this section an explicit first-order finite element
technique that, up to a CFL restriction, preserves all  convex
invariant sets of \eqref{def:hyperbolic_system} that contain
reasonable approximations of $\bu_0$.  Although most of the arguments
invoked in this section are quite standard and mimic Lax's
one-dimensional finite volume scheme, we are not aware of the
existence of such a finite-element-based scheme in the literature.

\subsection{The finite element space}
We want to approximate the solution of \eqref{def:hyperbolic_system}
with continuous finite elements.  Let $\famTh$ be a shape-regular
sequence of affine matching meshes. The elements in the mesh sequence
are assumed to be generated from a finite number of reference elements
denoted $\wK_1,\dots,\wK_\varpi$.  For example, the mesh $\calT_h$
could be composed of a combination of triangles and parallelograms in
two space dimensions ($\varpi=2$ in this case); it could also be
composed of a combination of tetrahedra, parallelepipeds, and
triangular prisms in three space dimensions ($\varpi=3$ in this
case). The affine diffeomorphism mapping $\wK_r$ to an arbitrary
element $K\in \calT_h$ is denoted $T_K : \wK_r \longrightarrow K$ and
its Jacobian matrix is denoted $\polJ_K$, $1\le r\le \varpi$. We now
introduce a set of reference Lagrange finite elements
$\{(\wK_r,\wP_r,\wSigma_r)\}_{1\le r\le \varpi}$ (the index
$r\in\intset {1}{\varpi}$ will be omitted in the rest of the paper to
alleviate the notation). Then we define the scalar-valued and
vector-valued Lagrange finite element spaces
\begin{align} \label{eq:Xh}
P(\calT_h) &=\{ v\in \calC^0(\Dom;\Real)\st 
v_{|K}{\circ}T_K \in \wP,\ \forall K\in \calT_h\},\qquad 
\bP(\calT_h) = [P(\calT_h)]^m.
\end{align}  
where $\wP$ is the reference polynomial space defined on $\wK$ (note
that the index $r$ has been omitted). Denoting $\nf:=\dim\wP$ and
denoting by $\{\wba_i\}_{i\in\intset{1}{\nf}}$ the Lagrange nodes of
$\wK$, we assume that the space $\wP$ is such that
\begin{equation}
  \min_{1\le \ell \le \nf} \wv(\wba_\ell) \le \wv(\wbx) \le 
  \max_{1\le \ell \le \nf} \wv(\wba_\ell), 
  \quad \forall \wv\in \wP, \forall \wbx\in \wK. \label{reference_mesh_convexity_assumption} 
\end{equation}
Denoting by $\polP_1$ and $\polQ_1$ the set of multivariate
polynomials of total and partial degree at most $1$, respectively;
the above assumption holds for $\wP=\polP_1$ when $K$ is a
simplex and $\wP=\polQ_1$ when $K$ is a parallelogram or a
cuboid. This assumption holds also for first-order prismatic elements
in three space dimensions.

Let $\{\ba_i\}_{i\in\intset{1}{\Nglob}}$ be the collection of all the
Lagrange nodes in the mesh $\calT_h$, and let
$\{\varphi_i\}_{i\in\intset{1}{\Nglob}}$ be the corresponding global
shape functions.  Recall that $\{\varphi_i\}_{i\in\intset{1}{\Nglob}}$
forms a basis of $P(\calT_h)$ and $\varphi_i(\ba_j)=\delta_{ij}$. The
Lagrange interpolation operator in $\calP(\calT_h)$ is denoted
$\Pi_h:\calC^0(\overline\Dom) \longrightarrow \calP(\calT_h)$.  Recall that
$\Pi_h(v) = \sum_{1\le i \le \Nglob} v(\ba_i) \varphi_i$. We denote by
$S_i$ the support of $\varphi_i$ and by $\mes{S_i}$ the measure of
$S_i$, $i\in\intset{1}{\Nglob}$. We also define $S_{ij}:=S_i\cap S_j$
the intersection of the two supports $S_i$ and $S_j$. Let $E$ be a
union of cells in $\calT_h$; we define
$\calI(E):=\{j\in\intset{1}{\Nglob} \st \mes{S_j \cap E}\not=0\}$ the
set that contains the indices of all the shape functions whose support
on $E$ is of nonzero measure. We are going to regularly invoke
$\calI(K)$ and $\calI(S_i)$ and the partition of unity property:
$\sum_{i\in\calI(K)} \varphi_i(\bx) =1$ for all $\bx\in K$.

We define the operator $\opCoord: P(\calT_h) \longrightarrow
\Real^\Nglob$ so that $\opCoord(v_h)$ is the coordinate vector of
$v_h$ in the basis $\{\varphi_i\}_{i\in\intset{1}{\Nglob}}$, \ie
$v_h=\sum_{i=1}^\Nglob \opCoord(v_h)_i\varphi_i$. Note that
$\opCoord(v_h)_i=v_h(\ba_i)$.  We are also going to use capital
letters for the coordinate vectors to alleviate the notation; for
instance we shall write $\sfV=\opCoord(v_h)$ when the context is
unambiguous.  Note finally that the above assumptions on the mesh and
the reference elements imply the following property: for all $\bx\in
K$ and all $K\in \calT_h$,
\begin{equation}
  \min_{\ell \in\calI(K)}\opCoord(v_h)_\ell \le v_h(\bx) \le 
  \max_{\ell \in\calI(K)} \opCoord(v_h)_\ell, 
  \quad \forall v_h\in P(\calT_h). \label{mesh_convexity_assumption} 
\end{equation}
We define similarly the  $\bopCoord: \bP(\calT_h) \longrightarrow
(\Real^m)^\Nglob$, \ie $\bv_h = \sum_{i=1}^I \bopCoord(\bv_h)_i \varphi_i$,
or equivalently $\bopCoord(\bv_h)_i = \bv_h(\ba_i)$.

Let $\calM\in \Real^{\Nglob\CROSS \Nglob}$ be the
consistent mass matrix with entries
$\int_{S_{ij}} \varphi_i(\bx)\varphi_j(\bx)\diff x$, and let $\calM^L$
be the diagonal lumped mass matrix with entries
\begin{equation}
  m_i:= \int_{S_i} \varphi_i(\bx)\diff x. \label{def_of_mi}
\end{equation}
The partition of unity property implies that $m_i = \sum_{j\in
  \calI(S_i)} \int \varphi_j(\bx)\varphi_i(\bx)\diff x$, \ie the
entries of $\calM^L$ are obtained by summing the rows of $\calM$.

\subsection{The scheme} \label{Sec:The_scheme} Let $\bu_{h0} \in \calP(\calT_h)$ be a
reasonable approximation of $\bu_0$ (we shall be more precise in the
following sections). Let $n\in \polN$, $\dt$ be the time step, $t^n$
be the current time, and let us set $t^{n+1}=t^n+\dt$. Let
$\bu_h^n\in\calP(\calT_h)$ be the space approximation of $\bu$ at time $t^n$
and set $\bsfU^n=\bopCoord(\bu_h^n)$.  We propose to compute
$\bu_h^{n+1}$ by
 \begin{equation}
m_i \frac{\bsfU_i^{n+1}-\bsfU_i^n}{\dt} 
+ \int_\Dom  \DIV(\Pi_h\bef(\bu_h^n)) \varphi_i \diff x  \\
 - \sum_{j\in \calI(S_i)} d_{ij} \bsfU^n_j  = 0,  \label{def_of_scheme_dij}
\end{equation}
where $\bsfU^{n+1}=\bopCoord(\bu_h^{n+1})$ and the
lumped mass matrix is used for the approximation of the time
derivative.
The coefficient $d_{ij}$ is an artificial
viscosity for the pair $(i,j)$ that has yet to be clearly identified. 
For the time
being we assume that 
\begin{equation}
  d_{ij}\ge 0, \quad \text{if}\ \ \  i\not= j,\quad
  d_{ij}=d_{ji},\quad  \text{and} \quad  d_{ii}:=\sum_{i\ne j\in\calI(S_i)}  -d_{ji}.
\label{introduction_dij}
\end{equation}  
Using that
$\DIV(\Pi_h\bef(\bu_h^n)) = \sum_j \bef(\bsfU_j^n)\SCAL\GRAD
\varphi_j$, the above equation simplifies into
\begin{equation}
  m_i \frac{\bsfU_i^{n+1}-\bsfU_i^n}{\dt} 
  + \sum_{j\in \calI(S_i)} \bef(\bsfU_j^n)\SCAL \bc_{ij}
  - \bsfU^n_j d_{ij} = 0,  \label{update_cijdij_dij}
\end{equation}
where the coefficients $\bc_{ij}\in \Real^d$ 
are defined by
\begin{equation}
\bc_{ij} = \int_\Dom \varphi_i \GRAD\varphi_j \diff x,\
\label{def_of_cij}
\end{equation}

\begin{remark}[Conservation]
The definition $d_{ii}:=\sum_{i\ne j\in\calI(S_i)} -d_{ji}$ implies that
$\sum_{j\in\calI(S_i)} d_{ji}=0$, which in turn implies conservation,
\ie
$\int_\Dom \bu_h^{n+1} \diff x =\int_\Dom \bu_h^{n} \diff x +
\int_\Dom \DIV(\Pi_h\bef(\bu_h^n))\diff x$.
Note also that the symmetry assumption in \eqref{introduction_dij} implies
$d_{ii}:=\sum_{i\ne j\in\calI(S_i)} -d_{ij}$, which is often easier to compute.  
\end{remark}

\subsection{The convex combination argument}
\label{Sec:convex_argument}
We motivate the choice of the artificial viscosity coefficients
$d_{ij}$ in this section. Observing that the partition of unity
property $\sum_{j\in \calI(S_i)} \varphi_j =1$ and
\eqref{introduction_dij} imply conservation, \ie
\begin{equation}
\sum_{j\in\calI(S_i)} \bc_{ij} = 0, \qquad  \sum_{j\in\calI(S_i)} d_{ij} =0.
\end{equation} 
we re-write \eqref{update_cijdij_dij}  as follows: 
\begin{equation}
  m_i \frac{\bsfU_i^{n+1}-\bsfU_i^n}{\dt} 
  = -\sum_{j\in \calI(S_i)} (\bef(\bsfU_j^n) - \bef(\bsfU_i^n)) \SCAL \bc_{ij}
  + d_{ij}(\bsfU^n_j+\bsfU^n_i).  
\label{Unplusone_convex_combination_dij_rewritten}
\end{equation}
Using again conservation, \ie
$d_{ii} = -\sum_{i\ne j\in\calI(S_i)} d_{ij}$, we finally arrive at
\begin{equation}
  \bsfU_i^{n+1}  = \bsfU_i^n
  \Big(1 - \sum_{i\ne j\in \calI(S_i)} \frac{2\dt d_{ij}}{m_i} \Big) 
  + \sum_{i\ne j\in \calI(S_i)}\frac{2\dt d_{ij}}{m_i}\overline\bsfU_{ij}^{n+1}.   
\label{Unplusone_convex_combination_dij}
\end{equation}
where we have introduced the auxiliary quantities
\begin{equation}
\overline\bsfU_{ij}^{n+1} := \frac12(\bsfU^n_j+\bsfU^n_i) 
- (\bef(\bsfU_j^n) - \bef(\bsfU_i^n)) \SCAL \frac{\bc_{ij}}{2d_{ij}}. \label{def:Ubar_dij}
\end{equation}

A first key observation is that \eqref{Unplusone_convex_combination_dij}
is a convex combination provided $\dt$ is small enough.  A second key
observation at this point is that upon setting $\bn_{ij}:=
\bc_{ij}/\|\bc_{ij}\|_{\ell^2}$, $\overline\bsfU_{ij}^{n+1}$ is exactly
of the form $\overline\bu(t,\bn_{ij},\bsfU_i,\bsfU_j)$ as defined in
\eqref{elemetary_Riemann_pb} with a fake time
$t=\|\bc_{ij}\|_{\ell^2}/2d_{ij}$. The CFL condition
$t\lambda_{\max}(\bn_{ij},\bu_L,\bu_R)\le \frac12 $ in
Lemma~\ref{Lem:elemetary_Riemann_pb} motivates the following
definition for the viscosity coefficients $d_{ij}$
\begin{equation}
  d_{ij} := \max(\lambda_{\max}(\bn_{ij},\bsfU_i^n,\bsfU_j^n)
  \|\bc_{ij}\|_{\ell^2},
\lambda_{\max}(\bn_{ji},\bsfU_j^n,\bsfU_i^n) \|\bc_{ji}\|_{\ell^2}), \label{Def_of_dij} 
\end{equation}
where recall that $\lambda_{\max}(\bn_{ij},\bsfU_i,\bsfU_j)$ is defined
in the assumption \eqref{finite_speed_assumption}.

\begin{remark}[Symmetry]
  If either $\ba_i$ or $\ba_j$ is an interior node in the mesh, one
  integration by parts implies that $\bc_{ij} = -\bc_{ji}$, which in
  turn implies
  $\lambda_{\max}(\bn_{ij},\bsfU_i,\bsfU_j)=
  \lambda_{\max}(\bn_{ji},\bsfU_j,\bsfU_i)$.
  In conclusion
  $\lambda_{\max}(\bn_{ij},\bsfU_i,\bsfU_j) \|\bc_{ij}\|_{\ell^2}=
  \lambda_{\max}(\bn_{ji},\bsfU_j,\bsfU_i) \|\bc_{ji}\|_{\ell^2}$
  if either $\ba_i$ or $\ba_j$ is an interior node.
\end{remark}

\begin{remark}[Upwinding]
  Note that in the scalar one-dimensional case when the flux $f$ is
  linear, \eqref{def_of_scheme_dij} gives the usual upwinding
  first-order method.
 \end{remark}

\section{Stability analysis}
\label{Sec:stability}
We analyze the stability properties of the scheme
\eqref{def_of_scheme_dij} with the viscosity defined in
\eqref{Def_of_dij}.

\subsection{Invariant domain property} 
Upon defining $h_K:=\text{diam}(K)$, 
the global maximum mesh size is denoted $h = \max_{K\in\calT_h} h_K$.
The local minimum mesh size, $\hmin_K$, for any $K\in \calT_h$ is
defined as follows:
\begin{equation} \label{eq:local_mesh}
\hmin_K:=\frac{1}{\max_{i\ne j\in \calI(K)}\|\GRAD\varphi_i\|_{\bL^\infty(S_{ij})}},
\end{equation}  
and the global minimum mesh size is
$\hmin:=\min_{K\in \calT_h} \hmin_K$.  Due to the shape regularity
assumption, the quantities $\hmin_K$ and $h_K$ are uniformly
equivalent, but it will turn out that using $\hmin_K$ instead of $h_K$
gives a sharper estimate of the CFL number.
Let $\nf:=\text{card}(\calI(K))$ and let us define
$\vartheta_K:=\frac{1}{\nf-1}$. Note that
\begin{equation}
  0< \vartheta_{\min} := \min_{\famTh} \min_{K\in \calT_h} 
  \vartheta_K<+\infty,\hspace{-5pt}\label{Def_of_rho}
\end{equation}
since there are at most $\varpi$ reference elements defining the mesh sequence.
We also  introduce the mesh-dependent
quantities
\begin{equation}
\mu_{\min} := \min_{K\in\calT_h}\min_{i\in
  \calI(K)} \frac{1}{|K|}\int_K \varphi_i(\bx)\diff x,\quad 
\mu_{\max} :=
\max_{K\in\calT_h}\max_{i\in \calI(K)} \frac{1}{|K|}\int_K
\varphi_i(\bx)\diff x.
\end{equation}  
Note that $\mu_{\min} =\mu_{\max}= \frac{1}{\nf} = \frac{1}{d+1}$ for
meshes uniquely composed of simplices and
$\mu_{\min} =\mu_{\max}=2^{-d}$ for meshes uniquely composed of
parallelograms and cuboids.  We now prove the main result of the
paper.
\begin{theorem} \label{Thm:UL_is_invariant} Let $A\subset \calA$ be an
  invariant set for \eqref{def:hyperbolic_system} in the sense of
  Definition~\ref{Def:invariant_set}. Assume that $A$ is convex and 
\begin{equation} \lambda_{\max}(A):=\max_{\bn\in
      S^{d-1}(\bzero,1)}\max_{\bu_L,\bu_R\in A}
    \lambda_{\max}(\bn,\bu_L,\bu_R)<\infty, \label{lambda_max_bounded}
\end{equation}
where $S^{d-1}(\bzero,1)$ is the unit sphere in $\Real^d$. Assume that
  $\bu_{h0} \in A$ and $\dt$ is such that
\begin{equation}
2\dt\frac{\lambda_{\max}(A)}{\hmin}
\frac{\mu_{\max}}{\mu_{\min}\vartheta_{\min}}\le 1.
\label{CFL_assumption}
\end{equation}
Then
\begin{enumerate}[(i)]
\item \label{Lem:UL_is_invariant:item1} $A$ is an invariant domain
for the solution process $\bu_h^n \longmapsto \bu_h^{n+1}$ for all $n\ge 0$.
\item \label{Lem:UL_is_invariant:item2} Given $n\ge 0$ and
  $i\in\intset{1}{\Nglob}$, let $B\subset A$ be a convex invariant set
  such that $\bsfU_l^n\in B$ for all $l\in \calI(S_i)$, then
$\bsfU_i^{n+1} \in B$.
\end{enumerate}
\end{theorem}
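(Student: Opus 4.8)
The plan is to work directly from the convex-combination identity \eqref{Unplusone_convex_combination_dij}, which expresses $\bsfU_i^{n+1}$ as an affine combination of $\bsfU_i^n$ and the auxiliary states $\overline\bsfU_{ij}^{n+1}$ with weights $\bigl(1 - \sum_{j\ne i} 2\dt d_{ij}/m_i\bigr)$ and $2\dt d_{ij}/m_i$. The first step is to verify that, under the CFL hypothesis \eqref{CFL_assumption}, this affine combination is genuinely a convex combination, i.e. all weights are nonnegative and sum to one. The sum-to-one part is immediate from conservation; the nonnegativity of the first weight requires bounding $\sum_{i\ne j} 2\dt d_{ij}/m_i$ by $1$. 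Here I would use \eqref{Def_of_dij} to bound $d_{ij}\le \lambda_{\max}(A)\max(\|\bc_{ij}\|_{\ell^2},\|\bc_{ji}\|_{\ell^2})$ (legitimate since all $\bsfU_l^n\in A$ by induction), then bound $\|\bc_{ij}\|_{\ell^2}$ in terms of $h_K$ and $|K|$ using $\|\GRAD\varphi_j\|_{\bL^\infty}\le 1/\hmin_K$ and the definitions of $\mu_{\min},\mu_{\max},\vartheta_{\min}$, and finally compare against $m_i=\sum_{K\subset S_i}\int_K\varphi_i$. Summing over the at most $\nf-1 = 1/\vartheta_K$ neighbors $j\ne i$ sharing a cell with $i$ is exactly where the factor $\mu_{\max}/(\mu_{\min}\vartheta_{\min})$ enters; the CFL condition is designed so that this sum is $\le 1$.

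The second step is to show each auxiliary state $\overline\bsfU_{ij}^{n+1}$ lies in $A$. By the "second key observation" in \S\ref{Sec:convex_argument}, $\overline\bsfU_{ij}^{n+1}$ equals $\overline\bu(\tau_{ij},\bn_{ij},\bsfU_i^n,\bsfU_j^n)$ from Lemma~\ref{Lem:elemetary_Riemann_pb} with the fake time $\tau_{ij}=\|\bc_{ij}\|_{\ell^2}/(2d_{ij})$. I must check the CFL hypothesis of that lemma, namely $\tau_{ij}\,\lambda_{\max}(\bn_{ij},\bsfU_i^n,\bsfU_j^n)\le \tfrac12$: from \eqref{Def_of_dij}, $d_{ij}\ge \lambda_{\max}(\bn_{ij},\bsfU_i^n,\bsfU_j^n)\|\bc_{ij}\|_{\ell^2}$, so $\tau_{ij}\lambda_{\max}\le \|\bc_{ij}\|_{\ell^2}\lambda_{\max}/(2\lambda_{\max}\|\bc_{ij}\|_{\ell^2}) = \tfrac12$, exactly as needed (with the obvious degenerate case $d_{ij}=0$ handled separately, where then $\bc_{ij}=0$ and the term drops out). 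Lemma~\ref{Lem:elemetary_Riemann_pb} then says $\overline\bsfU_{ij}^{n+1}$ is the average over $[-\tfrac12,\tfrac12]$ of the Riemann solution with states $\bsfU_i^n,\bsfU_j^n\in A$; since that interval contains the Riemann fan $(\lambda_1^-\tau_{ij},\lambda_m^+\tau_{ij})$, the remark following Definition~\ref{Def:invariant_set} gives $\overline\bsfU_{ij}^{n+1}\in A$ because $A$ is an invariant set. Then convexity of $A$ and step one give $\bsfU_i^{n+1}\in A$, and an induction on $n$ (started from $\bu_{h0}\in A$) together with property \eqref{mesh_convexity_assumption} — which guarantees that if the nodal values are in the convex set $A$ then the finite element function itself takes values in $A$ — establishes part \eqref{Lem:UL_is_invariant:item1}.

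For part \eqref{Lem:UL_is_invariant:item2} the argument is the same but localized: if $\bsfU_l^n\in B$ for all $l\in\calI(S_i)$, then for each $j\in\calI(S_i)$ the auxiliary state $\overline\bsfU_{ij}^{n+1}$ is an average of a Riemann solution with both states in $B$, hence in $B$ since $B$ is an invariant set; and $\bsfU_i^n\in B$; so the convex combination $\bsfU_i^{n+1}$ lies in $B$ by convexity of $B$. Note that the CFL bound needed here is still \eqref{CFL_assumption} since $B\subset A$ makes $\lambda_{\max}(B)\le\lambda_{\max}(A)$, so the convexity of the combination is inherited.

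The main obstacle is the first step: getting the constant in the CFL condition right, i.e. showing $\sum_{i\ne j\in\calI(S_i)} 2\dt d_{ij}/m_i\le 1$ under \eqref{CFL_assumption}. This is the only place where the mesh geometry enters quantitatively, and it requires carefully relating $\|\bc_{ij}\|_{\ell^2}$, $m_i$, $\hmin_K$, $|K|$, and the combinatorial count of neighbors, cell by cell, then summing. Everything else (the Riemann-fan averaging, the induction, the handling of $d_{ij}=0$, the use of \eqref{mesh_convexity_assumption}) is essentially bookkeeping once this estimate is in hand.
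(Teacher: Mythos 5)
Your proposal follows the paper's proof essentially step for step: the same convex-combination identity \eqref{Unplusone_convex_combination_dij}, the same identification of $\overline\bsfU_{ij}^{n+1}$ with the Riemann-fan average of Lemma~\ref{Lem:elemetary_Riemann_pb} whose CFL hypothesis is built into \eqref{Def_of_dij}, and the same chain of estimates $\|\bc_{ij}\|_{\ell^2}\le \hmin^{-1}\mu_{\max}\mes{S_{ij}}$, $\sum_{i\ne j}\mes{S_{ij}}\le \vartheta_{\min}^{-1}\mes{S_i}$, $m_i\ge\mu_{\min}\mes{S_i}$ to show the weights are nonnegative under \eqref{CFL_assumption}, followed by induction, \eqref{mesh_convexity_assumption}, and the localized version for part (ii). The argument is correct; the only detail you add beyond the paper is the (harmless) aside about the degenerate case $d_{ij}=0$.
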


\begin{proof}
  We prove the statement \eqref{Lem:UL_is_invariant:item1} by induction. Assume that
  $\bu_h^n\in A$ for some $n\ge 0$; we are going to prove that
  $\bu_h^{n+1}\in A$. Note that $\bu_{h0} \in A$ by assumption.  Let
  $i\in\intset{1}{\Nglob}$ and consider the update
  \eqref{def_of_scheme_dij} rewritten in the form
  \eqref{Unplusone_convex_combination_dij}.  Observe that upon defining
  $\bn_{ij} := \bc_{ij}/\|\bc_{ij}\|_{\ell^2}$, the quantity
  $\overline\bsfU_{ij}^{n+1}$ defined in \eqref{def:Ubar_dij} is exactly of
  the form $\overline\bu(t,\bn_{ij},\bsfU_{i}^n,\bsfU_{j}^n)$ as defined
  in \eqref{elemetary_Riemann_pb} with the flux $\bef\SCAL \bn_{ij}$
  and the fake time $t=\|\bc_{ij}\|_{\ell^2}/2d_{ij}$. The definition
\begin{align}
d_{ij} \ge \lambda_{\max}(\bn_{ij},\bsfU_{i}^n,\bsfU_{j}^n) \|\bc_{ij}\|_{\ell^2},  
\label{lower_bound_on_dij}
\end{align}
is the CFL condition for the conclusions of Lemma~\ref{Lem:elemetary_Riemann_pb}
to hold with fake time $t=\|\bc_{ij}\|_{\ell^2}/2d_{ij}$.  Since $A$ is a convex
invariant set we have
$\overline\bsfU_{ij}^{n+1}:=\overline\bu(t,\bn_{ij},\bsfU_{i}^n,\bsfU_{j}^n)
\in A$ for all $j\in \calI(S_i)$.
Let us now prove that \eqref{Unplusone_convex_combination_dij} is indeed a
convex combination by proving that $1 - \sum_{i\ne j\in \calI(S_i)}
\frac{2\dt d_{ij}}{m_i} = 1 + \frac{2\dt d_{ii}}{m_i}\ge 0 $. 
Note first that 
\[
\|\bc_{ij}\|_{\ell^2} \le \int_{S_{ij}}\|\GRAD\varphi_j\|_{\ell^2} \varphi_i \diff x 
\le \hmin^{-1} \int_{S_{ij}}\varphi_i \diff x 
\le \hmin^{-1} \mu_{\max} \mes{S_{ij}}.
\]
The definition of $d_{ii}$ implies that
\begin{align*}
-d_{ii} & \le \frac{\lambda_{\max}(A)}{\hmin} \mu_{\max} \sum_{i\not= j\in\calI(S_i)}\mes{S_{ij}}
\le \frac{\lambda_{\max}(A)}{\hmin}\frac{\mu_{\max}}{\vartheta_{\min}}\mes{S_{i}} .
\end{align*}
The using that $\mu_{\min} \mes{S_i} \le m_i$, we infer that
\[
-2\dt\frac{d_{ii}}{m_i} \le 2\dt\frac{\lambda_{\max}(A)}{\hmin}
\frac{\mu_{\max}}{\mu_{\min}\vartheta_{\min}}\le 1,
\]
which proves the result owing to the CFL assumption
\eqref{CFL_assumption}.  Hence \eqref{Unplusone_convex_combination_dij}
defines $\bsfU_i^{n+1}$ as a convex combination between $\bsfU_i^n$ and
the collection of states $\{\overline\bsfU_{ij}^{n+1}\}_{j\in
  \calI(S_i)}$.  The convexity of $A$ implies that $\bsfU_i^{n+1}\in
A$, since $\bsfU_i^{n}\in A$ by assumption and we have established
above that $\overline\bsfU_{ij}^{n+1}\in A$ for all $j\in \calI(S_i)$.
The space approximation being piecewise linear, a convexity
argument implies again that $\bu_h^{n+1} \in A$, which proves the induction 
assumption.

Note in passing that we have also proved the following local
invariance property: given any convex invariant set $B\subset A$ that
contains $\{\bsfU_l^n\}_{l\in \calI(S_i)}$, $\bsfU_i^{n+1}$ is also in
$B$, \ie the local statement \eqref{Lem:UL_is_invariant:item2} holds. This
completes the proof.
\end{proof}

\begin{remark}
  The arguments invoking the convex combination~\eqref{Unplusone_convex_combination_dij} and
  the one-dimensional Riemann averages~\eqref{def:Ubar_dij} are
  similar in spirit to those used in the proof of Theorem~3 in
  \cite{Perthame_Shu_1996}.
\end{remark}

\subsection{Discrete entropy inequality} 
We now derive a local entropy inequality.
\begin{theorem}\label{Thm:entrop_ineq} 
  Let $A\subset \calA$ be a convex invariant set for
  \eqref{def:hyperbolic_system}. Let $(\eta,\bq)$ be an entropy pair
  for \eqref{def:hyperbolic_system}.  Assume that
  \eqref{entropy_inequality_elemetary_Riemann_pb} holds for any
  Riemann data $(\bu_L,\bu_R)$ in $A$, and any $\bn\in
  S^{d-1}(\bzero,1)$. Assume also that \eqref{lambda_max_bounded} and
  \eqref{CFL_assumption} hold, then we have the following for any
  $n\ge 0$ and any $i\in \intset{1}{I}$:
\begin{align}
  \frac{m_i}{\dt} (\eta(\bsfU_i^{n+1}) - \eta(\bsfU_i^n))   
   + \int_\Dom \DIV(\Pi_h\bq(\bu_h^n)) \varphi_i \diff x  +
   \sum_{i\ne j\in \calI(S_i)} d_{ij} \eta(\bsfU_{j}^{n}) \le 0. 
\label{discrete_entropy_inequality}
\end{align}
\end{theorem}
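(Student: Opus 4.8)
The plan is to follow the same convexity strategy used for Theorem~\ref{Thm:UL_is_invariant}, but now applied to the entropy $\eta$ rather than to the state $\bsfU$, invoking Lemma~\ref{Lem:entropy_elemetary_Riemann_pb} in place of Lemma~\ref{Lem:elemetary_Riemann_pb}. First I would rewrite the update \eqref{def_of_scheme_dij}, exactly as in \S\ref{Sec:convex_argument}, in the convex-combination form
\[
\bsfU_i^{n+1} = \Big(1 - \sum_{i\ne j\in\calI(S_i)}\frac{2\dt d_{ij}}{m_i}\Big)\bsfU_i^n
+ \sum_{i\ne j\in\calI(S_i)} \frac{2\dt d_{ij}}{m_i}\,\overline\bsfU_{ij}^{n+1},
\]
where $\overline\bsfU_{ij}^{n+1} = \overline\bu(t_{ij},\bn_{ij},\bsfU_i^n,\bsfU_j^n)$ with fake time $t_{ij}=\|\bc_{ij}\|_{\ell^2}/2d_{ij}$ and flux $\bef\SCAL\bn_{ij}$. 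The CFL hypothesis \eqref{CFL_assumption} guarantees (as shown in the proof of Theorem~\ref{Thm:UL_is_invariant}) that the weights are nonnegative and sum to one, and that all the states $\bsfU_i^n$, $\overline\bsfU_{ij}^{n+1}$ lie in $A$. Applying Jensen's inequality for the convex function $\eta$ to this convex combination gives
\[
\eta(\bsfU_i^{n+1}) \le \Big(1 - \sum_{i\ne j}\frac{2\dt d_{ij}}{m_i}\Big)\eta(\bsfU_i^n)
+ \sum_{i\ne j}\frac{2\dt d_{ij}}{m_i}\,\eta(\overline\bsfU_{ij}^{n+1}).
\]

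Next, for each pair $(i,j)$ I would apply Lemma~\ref{Lem:entropy_elemetary_Riemann_pb} with the flux $\bef\SCAL\bn_{ij}$, entropy flux $\bq\SCAL\bn_{ij}$, and fake time $t_{ij}$; the relevant CFL condition $t_{ij}\lambda_{\max}(\bn_{ij},\bsfU_i^n,\bsfU_j^n)\le \tfrac12$ is precisely \eqref{lower_bound_on_dij}, which holds by the definition \eqref{Def_of_dij} of $d_{ij}$. This yields
\[
\eta(\overline\bsfU_{ij}^{n+1}) \le \tfrac12\big(\eta(\bsfU_i^n)+\eta(\bsfU_j^n)\big)
- t_{ij}\big(\bq(\bsfU_j^n)\SCAL\bn_{ij} - \bq(\bsfU_i^n)\SCAL\bn_{ij}\big).
\]
Substituting this into the Jensen bound, multiplying through by $m_i/\dt$, and using $2 d_{ij} t_{ij} = \|\bc_{ij}\|_{\ell^2}$ together with $\bn_{ij}\|\bc_{ij}\|_{\ell^2} = \bc_{ij}$, the first-order terms collapse: the $\eta(\bsfU_i^n)$ contributions cancel against the leading coefficient, leaving
\[
\frac{m_i}{\dt}\big(\eta(\bsfU_i^{n+1})-\eta(\bsfU_i^n)\big)
\le -\sum_{i\ne j\in\calI(S_i)} d_{ij}\,\eta(\bsfU_j^n)
- \sum_{i\ne j\in\calI(S_i)}\big(\bq(\bsfU_j^n)-\bq(\bsfU_i^n)\big)\SCAL\bc_{ij}.
\]
Finally, using the conservation identity $\sum_{j\in\calI(S_i)}\bc_{ij}=0$ to insert the diagonal $j=i$ term for free, the last sum becomes $\sum_{j\in\calI(S_i)}\bq(\bsfU_j^n)\SCAL\bc_{ij} = \int_\Dom \DIV(\Pi_h\bq(\bu_h^n))\varphi_i\diff x$, which is exactly the term appearing in \eqref{discrete_entropy_inequality}, so rearranging gives the claim.

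The main obstacle is bookkeeping rather than conceptual: one must be careful that Lemma~\ref{Lem:entropy_elemetary_Riemann_pb} is being applied with the \emph{normalized} normal $\bn_{ij}$ and the correspondingly scaled flux, so that the fake time $t_{ij}$ carries the factor $\|\bc_{ij}\|_{\ell^2}$, and that the identity $2 d_{ij} t_{ij}\bn_{ij} = \bc_{ij}$ is used consistently so the first-order flux terms telescope correctly against the convex-combination weights. One should also note that $d_{ij}=d_{ji}$ and $\bc_{ij}\ne\bc_{ij}$ in general at boundary nodes, but since the entropy estimate is purely local to the stencil $\calI(S_i)$ this causes no difficulty — only the diagonal conservation relation $\sum_j\bc_{ij}=0$ is needed, which holds unconditionally by the partition of unity. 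A minor point is that, unlike in Theorem~\ref{Thm:UL_is_invariant}, there is no need for a piecewise-linear convexity argument on $\bu_h^{n+1}$ here, since \eqref{discrete_entropy_inequality} is a statement about the nodal values only.
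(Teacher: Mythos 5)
Your strategy is exactly the paper's: rewrite the update as the convex combination \eqref{Unplusone_convex_combination_dij}, use the convexity of $\eta$, bound each $\eta(\overline\bsfU_{ij}^{n+1})$ with Lemma~\ref{Lem:entropy_elemetary_Riemann_pb} at the fake time $t_{ij}=\|\bc_{ij}\|_{\ell^2}/2d_{ij}$, and collapse the entropy-flux terms using $2d_{ij}t_{ij}\bn_{ij}=\bc_{ij}$ and $\sum_{j\in\calI(S_i)}\bc_{ij}=0$. All of that is right and is what the paper does.

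The one place you go wrong is the final bookkeeping, precisely the step you flagged as ``the main obstacle.'' The $\eta(\bsfU_i^n)$ contributions do \emph{not} cancel against the leading coefficient: after substituting the lemma into the Jensen bound, the coefficient of $\eta(\bsfU_i^n)$ is $1-\sum_{i\ne j}\tfrac{2\dt d_{ij}}{m_i}+\sum_{i\ne j}\tfrac{\dt d_{ij}}{m_i}=1-\sum_{i\ne j}\tfrac{\dt d_{ij}}{m_i}$, so multiplying by $m_i/\dt$ leaves
\begin{equation*}
\frac{m_i}{\dt}\big(\eta(\bsfU_i^{n+1})-\eta(\bsfU_i^n)\big)
\le \sum_{i\ne j\in\calI(S_i)} d_{ij}\big(\eta(\bsfU_j^n)-\eta(\bsfU_i^n)\big)
-\sum_{i\ne j\in\calI(S_i)}\big(\bq(\bsfU_j^n)-\bq(\bsfU_i^n)\big)\SCAL\bc_{ij},
\end{equation*}
and since $d_{ii}=-\sum_{i\ne j}d_{ij}$ the first sum equals $+\sum_{j\in\calI(S_i)}d_{ij}\eta(\bsfU_j^n)$ (diagonal included), not $-\sum_{i\ne j}d_{ij}\eta(\bsfU_j^n)$ as you wrote. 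The conclusion one actually obtains is
\begin{equation*}
\frac{m_i}{\dt}\big(\eta(\bsfU_i^{n+1})-\eta(\bsfU_i^n)\big)
+\int_\Dom\DIV(\Pi_h\bq(\bu_h^n))\varphi_i\diff x
-\sum_{j\in\calI(S_i)}d_{ij}\eta(\bsfU_j^n)\le 0,
\end{equation*}
which is the form consistent with the scheme \eqref{def_of_scheme_dij} (set $\eta(\bv)=\bv$ to recover it) and with the global inequality $\sum_i m_i\eta(\bsfU_i^{n+1})\le\sum_i m_i\eta(\bsfU_i^n)$ obtained using $\sum_i d_{ij}=0$. The viscous term in \eqref{discrete_entropy_inequality} as printed, $+\sum_{i\ne j}d_{ij}\eta(\bsfU_j^n)$, does not match this and appears to carry a sign/index typo; your derivation happens to land on that printed form, but only via the faulty cancellation, so fix the algebra and state the result in the form above. (Minor typo in your remarks: you presumably meant $\bc_{ij}\ne-\bc_{ji}$ at boundary nodes.)
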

\begin{proof}
  Let $(\eta,\bq)$ be an entropy pair for the system
  \eqref{def:hyperbolic_system}.  Let $i\in\intset{1}{\Nglob}$, then
  recalling \eqref{Unplusone_convex_combination_dij}, the CFL condition
  and the convexity of $\eta$ imply that
\[
  \eta(\bsfU_i^{n+1})  \le 
  \Big(1 - \sum_{i\ne j\in \calI(S_i)} \frac{2\dt d_{ij}}{m_i} \Big)\eta(\bsfU_i^n) 
  + \sum_{i\ne j\in \calI(S_i)}\frac{2\dt d_{ij}}{m_i}\eta(\overline\bsfU_{ij}^{n+1}).   
\]
Owing to Lemma~\ref{Lem:entropy_elemetary_Riemann_pb} we have
\[
 \eta(\overline\bsfU^{n+1}_{ij})
\le \tfrac{1}{2}(\eta(\bsfU_i^n)+\eta(\bsfU_j^n)) 
  - t(\bq(\bsfU_j^n)\SCAL\bn_{ij} - \bq(\bsfU_i^n)\SCAL\bn_{ij}).
\]
with $t=\|\bc_{ij}\|_{\ell^2}/2 d_{ij}$; hence,
\begin{multline*}
 \frac{m_i}{\dt} (\eta(\bsfU_i^{n+1}) - \eta(\bsfU_i^n))   \le 
   \sum_{i\ne j\in \calI(S_i)} 2 d_{ij}(\eta(\overline\bsfU_{ij}^{n+1}) -\eta(\bsfU_i^n)) \\
\le  \sum_{i\ne j\in \calI(S_i)} d_{ij}(\eta(\bsfU_{j}^{n}) - \eta(\bsfU_i^n))
- \|\bc_{ij}\|_{\ell^2}(\bq(\bsfU_j^n)\SCAL\bn_{ij} - \bq(\bsfU_i^n)\SCAL\bn_{ij}).
\end{multline*}
The conclusion follows from the definitions of $\bn_{ij}$, $\bc_{ij}$
and $d_{ij}$.
\end{proof}
\begin{remark}
  One recovers the
  equation \eqref{def_of_scheme_dij} from 
\eqref{discrete_entropy_inequality} with $\eta(\bv) = \bv$. Note also
  that \eqref{discrete_entropy_inequality} gives the global entropy
  inequality $\sum_{1\le i\le I} m_i\eta(\bsfU_i^{n+1}) \le \sum_{1\le
    i\le I} m_i\eta(\bsfU_i^n)$.
\end{remark}

\begin{remark}
  The meaning of the entropy inequality
  \eqref{entropy_inequality_elemetary_Riemann_pb} might be somewhat
  ambiguous in some cases, especially when $\bu$ is a measure.
  Since it is only the inequality \eqref{entropy_elemetary_Riemann_pb}
  that is really needed in the proof of Theorem~\ref{Thm:entrop_ineq},
  we could replace the assumption
  \eqref{entropy_inequality_elemetary_Riemann_pb} by
  \eqref{entropy_elemetary_Riemann_pb}. This would avoid having to
  invoke measure solutions since $\overline\bu(t,\bn,\bu_L,\bu_R)$
  should always be finite for the Riemann problem
  \eqref{def:Riemann_problem} to have a reasonable (physical) meaning.
\end{remark}

\subsubsection{Cell-based  vs. edge-based viscosity}
In the formulation \eqref{def_of_scheme_dij} the term $\sum_{j\in
  \calI(S_i)} d_{ij} \bsfU_j$ models some edge-based dissipation, \ie
$d_{ij}$ is a dissipation coefficient associated with the pair of
degrees of freedom of indices $(i,j)$. This formulation is related in
spirit to that of local extremum diminishing (LED) schemes developed
for scalar conservation equations in
\cite[Eq. (32)-(33)]{TurekKuzmin2002}, see also
\cite[\S2.1]{Jameson_1995}. It is however a bit difficult to
understand that we are modeling some artificial dissipation by
just staring at \eqref{def_of_scheme_dij}.

We now propose an alternative point of view using a cell-based
viscosity.  The traditional way to introduce dissipation in the finite
element world consists of invoking the weak form of the
Laplacian operator $-\DIV(\nu\GRAD \psi)$. For instance, assuming that
the viscosity field $\nu$ is piecewise constant over each mesh cell
$K\in \calT_h$, we write:
\begin{equation}
\int_\Dom -\DIV(\nu\GRAD \psi) \varphi_i \diff x =
\sum_{K\subset S_i} \nu_K \int_K \GRAD \psi \SCAL \GRAD\varphi_i \diff x. 
\label{weak_laplacian_form}
\end{equation}
Unfortunately, it has been shown in \cite{Guermond_Nazarov_2013} that
the bilinear form
$(\psi,\varphi)\longrightarrow \int_K \GRAD\psi\SCAL\GRAD\varphi\diff
x$
is not robust with respect to the shape of the cells. More
specifically, the convex combination argument, which is essential to
prove the maximum principle for scalar conservation equations in
arbitrary space dimension with continuous finite elements, can be made
to work only if
$\int_{S_{ij}} \GRAD \varphi_i\SCAL \GRAD \varphi_j\diff \bx < 0$ for
all pairs of shape functions, $\varphi_i$, $\varphi_j$, with common
support of nonzero measure. This is the well-known acute angle
condition assumption, which a priori excludes a lot of meshes in
particular in three space dimensions. To avoid this difficulty, it is
proposed in \citep{Guermond_Nazarov_2013} to replace
\eqref{weak_laplacian_form} by
$\sum_{K\subset S_i} \nu_K b_K(\psi, \varphi_i)$, where
\begin{equation}
b_K(\varphi_j,\varphi_i) = 
\begin{cases}
-\vartheta_K|K| & \text{if $i\not= j$},\quad i,j\in \calI(K), \\
|K| & \text{if $i=j$},\quad i,j\in \calI(K),\\
0  &  \text{if  $i\not\in \calI(K)$ or $j\not\in \calI(K)$}. \label{Def_of_bK}
\end{cases} 
\end{equation}
The essential properties of $b_K$ can be
summarized as follows:
\begin{lemma} \label{Lem:prop_of_bK}
  There is $c>0$ depending only on the collection
  $\{(\wK_r,\wP_r,\wSigma_r)\}_{1\le r\le \varpi}$ and the
  shape-regularity, such that the following
  identities hold for all $K\in \calT_h$ and all $u_h, v_h\in P(\calT_h)$:
\begin{align}
b_K(\varphi_i,\varphi_j)&= b_K(\varphi_j,\varphi_i),\qquad
b_K(\varphi_i,\sum_{j\in \calI(K)}\varphi_j) =0.\label{symmetry_and_conservation_of_bK}\\
b_K(u_h,v_h) &= \vartheta_K |K|  \sum_{i\in \calI(K)}  
\sum_{\calI(K)\ni j< i} (\sfU_i -\sfU_j)(\sfV_i-\sfV_j) \label{bK_scalar_product}\\
b_K(u_h,u_h) & \ge c h_K^2 \|\GRAD u_h\|_{\bL^2(K)}^2. \label{bK_coercive}
\end{align}
\end{lemma}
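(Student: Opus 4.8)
The plan is to treat the three assertions of Lemma~\ref{Lem:prop_of_bK} in order, from easiest to hardest. The symmetry and conservation identities \eqref{symmetry_and_conservation_of_bK} are immediate from the case definition \eqref{Def_of_bK}: the right-hand side is manifestly symmetric under exchange of the two indices, which gives symmetry; and for a fixed $i\in\calI(K)$, bilinearity gives $b_K(\varphi_i,\sum_{j\in\calI(K)}\varphi_j)=\sum_{j\in\calI(K)}b_K(\varphi_i,\varphi_j)=|K|+(\nf-1)(-\vartheta_K|K|)$, which vanishes precisely because $\vartheta_K=\tfrac{1}{\nf-1}$; when $i\notin\calI(K)$ every term is zero. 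So this part is essentially definitional.

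For the scalar-product identity \eqref{bK_scalar_product}, I would extend $b_K$ bilinearly, so that $b_K(u_h,v_h)=\sum_{i,j\in\calI(K)}\sfU_i\sfV_j\,b_K(\varphi_i,\varphi_j)=|K|\sum_{i\in\calI(K)}\sfU_i\sfV_i-\vartheta_K|K|\sum_{i\ne j\in\calI(K)}\sfU_i\sfV_j$. Separately, expanding $(\sfU_i-\sfU_j)(\sfV_i-\sfV_j)$ and summing over unordered pairs $j<i$ in $\calI(K)$ produces $(\nf-1)\sum_{k\in\calI(K)}\sfU_k\sfV_k-\sum_{i\ne j\in\calI(K)}\sfU_i\sfV_j$, since each index lies in exactly $\nf-1$ pairs; multiplying this by $\vartheta_K|K|=|K|/(\nf-1)$ reproduces the previous expression, which proves \eqref{bK_scalar_product}. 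In particular, setting $v_h=u_h$ yields $b_K(u_h,u_h)=\vartheta_K|K|\sum_{i}\sum_{j<i}(\sfU_i-\sfU_j)^2\ge 0$, which is the quantity to be compared with $\|\GRAD u_h\|_{\bL^2(K)}^2$.

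The only real work is the coercivity estimate \eqref{bK_coercive}, and I would obtain it by a reference-element scaling argument. Pull back to the reference cell: with $\hat u_h:=(u_h)_{|K}\circ T_K\in\wP$, the quantity $\|\GRAD_{\wbx}\hat u_h\|_{\bL^2(\wK)}^2$ is a positive semidefinite quadratic form in the nodal-value vector $(\sfU_\ell)_{\ell\in\calI(K)}$ that vanishes exactly when $\hat u_h$ is constant, i.e.\ when all nodal values coincide (here one uses that $\wP$ contains the constants, so Lagrange interpolation reproduces them); the form $\sum_{i}\sum_{j<i}(\sfU_i-\sfU_j)^2$ has the same kernel, namely the diagonal of $\Real^{\nf}$. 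Since both are continuous quadratic forms on a finite-dimensional space with identical kernels, there is $\hat c$, depending only on the reference element, with $\|\GRAD_{\wbx}\hat u_h\|_{\bL^2(\wK)}^2\le\hat c\sum_{i}\sum_{j<i}(\sfU_i-\sfU_j)^2$; taking the maximum over the finitely many reference elements $\{(\wK_r,\wP_r,\wSigma_r)\}$ makes $\hat c$ uniform. Transforming back via $\GRAD_{\bx}u_h=\polJ_K^{-\tr}\GRAD_{\wbx}\hat u_h$ and $\diff\bx=|\det\polJ_K|\diff\wbx$, and invoking shape-regularity to bound $\|\polJ_K^{-1}\|\lesssim h_K^{-1}$ together with $|\det\polJ_K|\simeq|K|$, gives $\|\GRAD u_h\|_{\bL^2(K)}^2\lesssim h_K^{-2}|K|\sum_{i}\sum_{j<i}(\sfU_i-\sfU_j)^2=(\vartheta_K h_K^2)^{-1}b_K(u_h,u_h)$; since $\vartheta_K\ge\vartheta_{\min}>0$ by \eqref{Def_of_rho}, this is \eqref{bK_coercive} with a constant of the required dependence. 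The main obstacle is purely bookkeeping: verifying that the two reference-cell quadratic forms genuinely share the same kernel, and keeping the Jacobian scaling constants clean so that the final constant depends only on the reference elements and the shape-regularity parameter.
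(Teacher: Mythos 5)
Your proof is correct. Note that the paper itself states Lemma~\ref{Lem:prop_of_bK} without proof (the form $b_K$ and its properties are imported from \cite{Guermond_Nazarov_2013}), so there is no in-text argument to compare against; what you supply is the standard, complete justification. The identities \eqref{symmetry_and_conservation_of_bK} and the pairwise-difference representation \eqref{bK_scalar_product} are indeed pure bookkeeping resting on the row-sum cancellation $|K|-(\nf-1)\vartheta_K|K|=0$, and your count that each index lies in exactly $\nf-1$ unordered pairs is the right one. For \eqref{bK_coercive}, the comparison of the two positive semidefinite quadratic forms on the reference element --- both with kernel equal to the constants, which uses unisolvence of the Lagrange element and the fact that the constants belong to $\wP$ --- followed by the scaling $\|\polJ_K^{-1}\|\lesssim h_K^{-1}$, $|\det\polJ_K|\simeq |K|$ under shape-regularity, is exactly how such inverse-type bounds are established; the finiteness of the set of reference elements and the lower bound $\vartheta_K\ge\vartheta_{\min}>0$ from \eqref{Def_of_rho} give the claimed uniformity of $c$. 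The one alternative worth mentioning is that the explicit formula recorded in the paper right after the lemma (expressing $b_K$ on a simplex through the stiffness matrix of the regular reference simplex) yields \eqref{bK_coercive} by direct computation in the simplicial case, but your kernel-comparison argument is preferable since it covers all admissible reference elements (simplices, cuboids, prisms) at once.
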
%
For instance, when $K$ is a simplex and $\wK$ is the regular simplex,
\ie all the edges are of unit length, it can be shown that
$b_K(\varphi_i,\varphi_i) = \kappa \int_{K} \polJ_K\tr (\GRAD
\varphi_j) \SCAL \polJ_K\tr (\GRAD \varphi_i)\diff x$ and
$b_K(\varphi_j,\varphi_i) = -\frac{\kappa}{1-\nf} \int_{K} \polJ_K\tr
(\GRAD \varphi_j) \SCAL \polJ_K\tr (\GRAD \varphi_i)\diff x$ for
$j\not= i$, with $\kappa=\frac12(1+\frac1d)$. Note also that
$b_K(\varphi_j,\varphi_i) \sim h_K^2 \int_{K} (\GRAD \varphi_j) \SCAL
(\GRAD \varphi_i)\diff x$ if $K$ is a regular simplex, thereby showing
the connection between $b_K$ and the more familiar bilinear form
associated with the Laplacian. 
One key argument from \cite{Guermond_Nazarov_2013} is the
recognition that the bilinear form defined in \eqref{Def_of_bK} has
all the good characteristics of the Laplacian-based diffusion (see
Lemma~\ref{Lem:prop_of_bK}) and makes
the convex combination argument to work independently of the space
dimension and the shape-regularity of the mesh family. 

Hence, instead of \eqref{def_of_scheme_dij}, we could also  compute
$\bu_h^{n+1}$ by
\begin{equation}
m_i \frac{\bsfU_i^{n+1}-\bsfU_i^n}{\dt} 
+ \int_\Dom  \DIV(\Pi_h\bef(\bu_h^n)) \varphi_i \diff x 
 +  \sum_{K\in \calT_h} \nu_K^n \sum_{j\in \calI(K)} \bsfU^n_j b_K(\varphi_j,\varphi_i)
 = 0,  \label{def_of_scheme}
\end{equation}
where $\{\nu_K^n\}_{K\in\calT_h}$ is a piecewise constant artificial
viscosity scalar field. 
\begin{theorem} \label{Thm:UL_is_invariant_cell_based}
Let  $\{\nu_K^n\}_{K\in\calT_h}$ be defined by
\begin{equation}
  \nu_K := \max_{i\ne j\in \calI(K)} 
  \frac{\lambda_{\max}(\bn_{ij},\bsfU_i,\bsfU_j) \|\bc_{ij}\|_{\ell^2}}{\sum_{T\subset S_{ij}} 
    -b_T(\varphi_j,\varphi_i)}. \label{Def_of_nuh_graph} 
\end{equation}
Then the conclusions of Theorem~\ref{Thm:UL_is_invariant} and
Theorem~\ref{Thm:entrop_ineq} hold under the assumptions
\eqref{lambda_max_bounded} and \eqref{CFL_assumption} and with the
solution process $\bu_h^n\longrightarrow \bu_h^{n+1}$, $n\ge 0$,
defined by \eqref{def_of_scheme}.
\end{theorem}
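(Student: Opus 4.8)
The plan is to deduce Theorem~\ref{Thm:UL_is_invariant_cell_based} from Theorems~\ref{Thm:UL_is_invariant} and~\ref{Thm:entrop_ineq} by exhibiting \eqref{def_of_scheme} as a particular instance of the edge-based scheme \eqref{def_of_scheme_dij}. To this end I would set, for all $i,j\in\intset{1}{\Nglob}$,
\[
d_{ij} := -\sum_{K\in\calT_h} \nu_K^n\, b_K(\varphi_j,\varphi_i) = -\sum_{K\subset S_{ij}} \nu_K^n\, b_K(\varphi_j,\varphi_i),
\]
the second equality holding since $b_K(\varphi_j,\varphi_i)=0$ unless $i,j\in\calI(K)$. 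Rewriting the dissipative term of \eqref{def_of_scheme} as $\sum_{K}\nu_K^n\sum_{j}\bsfU_j^n b_K(\varphi_j,\varphi_i)=\sum_j\bsfU_j^n\sum_K\nu_K^n b_K(\varphi_j,\varphi_i)=-\sum_{j\in\calI(S_i)} d_{ij}\bsfU_j^n$ then shows that \eqref{def_of_scheme} \emph{is} \eqref{def_of_scheme_dij} for this choice of the $d_{ij}$'s. It remains only to check that these coefficients satisfy the hypotheses used in the proofs of Theorems~\ref{Thm:UL_is_invariant} and~\ref{Thm:entrop_ineq}, namely the structural conditions \eqref{introduction_dij}, the lower bound \eqref{lower_bound_on_dij}, and an upper bound on $-d_{ii}$ compatible with \eqref{CFL_assumption}.

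The structural conditions follow at once from Lemma~\ref{Lem:prop_of_bK} and \eqref{Def_of_bK}: symmetry $d_{ij}=d_{ji}$ comes from $b_K(\varphi_i,\varphi_j)=b_K(\varphi_j,\varphi_i)$; nonnegativity of $d_{ij}$ for $i\ne j$ from $\nu_K^n\ge 0$ and $-b_K(\varphi_j,\varphi_i)=\vartheta_K|K|\ge 0$; and, using $\sum_{j\in\calI(K)}b_K(\varphi_j,\varphi_i)=0$, one obtains $-d_{ii}=\sum_{i\ne j\in\calI(S_i)}d_{ij}=-\sum_{K\subset S_i}\nu_K^n\sum_{j\in\calI(K)\setminus\{i\}}b_K(\varphi_j,\varphi_i)=\sum_{K\subset S_i}\nu_K^n|K|$, which is in particular the diagonal normalization demanded in \eqref{introduction_dij}. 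For the lower bound \eqref{lower_bound_on_dij}, fix $i\ne j$ with $\bc_{ij}\ne\bzero$; every cell $T\subset S_{ij}$ has $i,j\in\calI(T)$, so the pair $(i,j)$ is admissible in the maximum defining $\nu_T^n$ in \eqref{Def_of_nuh_graph}, whence $\nu_T^n\sum_{T'\subset S_{ij}}(-b_{T'}(\varphi_j,\varphi_i))\ge\lambda_{\max}(\bn_{ij},\bsfU_i^n,\bsfU_j^n)\|\bc_{ij}\|_{\ell^2}$ for each such $T$; multiplying by $-b_T(\varphi_j,\varphi_i)\ge 0$ and summing over $T\subset S_{ij}$ gives exactly $d_{ij}\ge\lambda_{\max}(\bn_{ij},\bsfU_i^n,\bsfU_j^n)\|\bc_{ij}\|_{\ell^2}$ (the case $\bc_{ij}=\bzero$ being trivial).

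The one genuinely new estimate—and the only place where the cell-based form must be massaged rather than merely quoted—is the CFL bound. Using $\|\bc_{ij}\|_{\ell^2}\le\hmin^{-1}\mu_{\max}\mes{S_{ij}}$ as in the proof of Theorem~\ref{Thm:UL_is_invariant}, together with $\sum_{T\subset S_{ij}}(-b_T(\varphi_j,\varphi_i))=\sum_{T\subset S_{ij}}\vartheta_T|T|\ge\vartheta_{\min}\mes{S_{ij}}$, every term in the maximum \eqref{Def_of_nuh_graph} is bounded by $\lambda_{\max}(A)\mu_{\max}/(\hmin\vartheta_{\min})$, hence $\nu_K^n\le\lambda_{\max}(A)\mu_{\max}/(\hmin\vartheta_{\min})$ for all $K$. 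Combining with the identity $-d_{ii}=\sum_{K\subset S_i}\nu_K^n|K|$ established above yields $-d_{ii}\le\frac{\lambda_{\max}(A)\mu_{\max}}{\hmin\vartheta_{\min}}\mes{S_i}$, and then $-2\dt d_{ii}/m_i\le\frac{2\dt\lambda_{\max}(A)\mu_{\max}}{\hmin\vartheta_{\min}\mu_{\min}}\le1$ owing to $m_i\ge\mu_{\min}\mes{S_i}$ and to \eqref{CFL_assumption}. With \eqref{introduction_dij}, \eqref{lower_bound_on_dij}, and this CFL bound in hand, the convex-combination argument \eqref{Unplusone_convex_combination_dij}--\eqref{def:Ubar_dij} and the entropy estimate of Theorem~\ref{Thm:entrop_ineq}, applied to these $d_{ij}$'s, go through word for word and deliver the invariant-domain property and the discrete entropy inequality \eqref{discrete_entropy_inequality}. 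I expect the reindexing leading to $-d_{ii}=\sum_{K\subset S_i}\nu_K^n|K|$ and the uniform bound on $\nu_K^n$ to be the only steps requiring care; everything else is a transcription of the edge-based proofs.
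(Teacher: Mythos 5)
Your proposal is correct and follows essentially the same route as the paper: recast \eqref{def_of_scheme} as the edge-based scheme with effective coefficients $\tilde d_{ij}=-\sum_{K\subset S_{ij}}\nu_K^n b_K(\varphi_j,\varphi_i)$, verify the lower bound \eqref{lower_bound_on_dij} directly from the definition \eqref{Def_of_nuh_graph}, and check the CFL bound on $-\tilde d_{ii}$ using $\|\bc_{ij}\|_{\ell^2}\le\hmin^{-1}\mu_{\max}\mes{S_{ij}}$ and $\sum_{T\subset S_{ij}}(-b_T(\varphi_j,\varphi_i))\ge\vartheta_{\min}\mes{S_{ij}}$. The only (cosmetic) difference is that you reach the diagonal bound via the exact identity $-d_{ii}=\sum_{K\subset S_i}\nu_K^n|K|$ and a uniform bound on $\nu_K^n$, whereas the paper bounds each $\tilde d_{ij}$ individually and then sums; both land on the same estimate $-d_{ii}\le\lambda_{\max}(A)\mu_{\max}\mes{S_i}/(\vartheta_{\min}\hmin)$.
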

\begin{proof}
Let us denote $\tilde d_{ij} := -\sum_{K\in S_{ij}} \nu_K^n b_K(\varphi_j,\varphi_i)$, then
\eqref{def_of_scheme} can be recast as follows:
\[
m_i \frac{\bsfU_i^{n+1}-\bsfU_i^n}{\dt} 
+ \sum_{j\in \calI(S_i)} \bef(\bsfU^{n}_j) \SCAL \bc_{ij} 
 -  \bsfU^n_j \tilde d_{ij}
 = 0,  
\]
which in turn implies that
\[
  \bsfU_i^{n+1}  = \bsfU_i^n
  \Big(1 - \sum_{i\ne j\in \calI(S_i)} \frac{2\dt \tilde d_{ij}}{m_i} \Big) 
  + \sum_{i\ne j\in \calI(S_i)}\frac{2\dt \tilde d_{ij}}{m_i}\overline\bsfU_{ij}^{n+1}.   
\]
where we have introduced the auxiliary quantities
\[
\overline\bsfU_{ij}^{n+1} := \frac12(\bsfU^n_j+\bsfU^n_i) -
(\bn_{ij}\SCAL\bef(\bsfU_j^n) - \bn_{ij}\SCAL \bef(\bsfU_i^n)) 
\frac{\|\bc_{ij}\|_{\ell^2}}{2\tilde
  d_{ij}}.
\]
Here again $\overline\bsfU_{ij}^{n+1}$ is of the form
$\overline\bu(t,\bn_{ij},\bsfU_i,\bsfU_j)$ as defined in
\eqref{elemetary_Riemann_pb} with the fake time
$t=\|\bc_{ij}\|_{\ell^2}/2\tilde d_{ij}$, hence we need to make sure
that $\lambda_{\max}(\bn_{ij},\bu_L,\bu_R)
\|\bc_{ij}\|_{\ell^2}/2\tilde d_{ij} \le \frac12$ to preserve the
invariant domain property. Recalling that $d_{ij}$ has been defined by
$d_{ij}:=\lambda_{\max}(\bn_{ij},\bu_L,\bu_R) \|\bc_{ij}\|_{\ell^2}$
(see \eqref{introduction_dij}), the above condition reduces to showing that
$d_{ij}\le \tilde d_{ij}$. The definitions of $\nu_K$ and $\tilde d_{ij}$ implies that
\begin{align*}
\tilde d_{ij} & = -\sum_{K\in S_{ij}} \nu_K^n b_K(\varphi_j,\varphi_i)
\ge -\sum_{K\in S_{ij}}  \frac{\lambda_{\max}(\bn_{ij},\bsfU_i,\bsfU_j) \|\bc_{ij}\|_{\ell^2}}{\sum_{T\subset S_{ij}} 
    -b_T(\varphi_j,\varphi_i)} b_K(\varphi_j,\varphi_i)\\
&\ge -\sum_{K\in S_{ij}}  \frac{d_{ij}}{\sum_{T\subset S_{ij}} 
    -b_T(\varphi_j,\varphi_i)} b_K(\varphi_j,\varphi_i) = d_{ij},
\end{align*}
whence the desired result. We now prove that $1 - \sum_{i\ne j\in
  \calI(S_i)} \frac{2\dt \tilde d_{ij}}{m_i}\ge 0$ under the CFL
condition \eqref{CFL_assumption}. From the proof of
Theorem~\ref{Thm:UL_is_invariant} we have $d_{ij} \le \lambda_{\max}(A)
\hmin^{-1}\mu_{\max}|S_{ij}|$, hence
\begin{align*}
\nu_K & \le  \frac{\lambda_{\max}(A)\mu_{\max}}{\hmin} \max_{k\ne l\in I(K)}
\frac{|S_{kl}|}{\sum_{T\subset S_{kl}}-b_T(\varphi_k,\varphi_l)},
\end{align*}
which in turn implies that
\begin{align*}
\tilde d_{ij} & \le \frac{\lambda_{\max}(A)\mu_{\max}}{\hmin} 
\sum_{K\subset S_{ij}} -b_K(\varphi_i,\varphi_j) \max_{k\ne l\in I(K)}
\frac{|S_{kl}|}{\sum_{T\subset S_{kl}}-b_T(\varphi_k,\varphi_l)}
\end{align*}
Recalling the definition of $b_T((\varphi_k,\varphi_l)$ we have
$\sum_{T\subset S_{kl}}-b_T(\varphi_k,\varphi_l)\ge \vartheta_{\min}
|T| = \vartheta_{\min} |S_{kl}|$; hence
\begin{align*}
\tilde d_{ij} & \le \frac{\lambda_{\max}(A)\mu_{\max}}{\vartheta_{\min}\hmin} 
\sum_{K\subset S_{ij}} -b_K(\varphi_i,\varphi_j) =\frac{\lambda_{\max}(A)\mu_{\max}}{\vartheta_{\min}\hmin} 
\sum_{K\subset S_{ij}} \vartheta_{K}|K|.
\end{align*}
Finally we have
\begin{align*}
-\tilde d_{ii}:=\sum_{i\ne j\in \calI(S_i)}\tilde d_{ij} & \le \frac{\lambda_{\max}(A)\mu_{\max}}{\vartheta_{\min}\hmin} 
\sum_{i\ne j\in \calI(S_i)} \sum_{K\subset S_{ij}} \vartheta_{K}|K| = \frac{\lambda_{\max}(A)\mu_{\max}}{\vartheta_{\min}\hmin}  |S_{i}|.
\end{align*}
This means that the bound on $-\tilde d_{ii}$ is the same as that on
$-d_{ii}$ in the proof of Theorem~\ref{Thm:UL_is_invariant}. This
concludes the proof.
\end{proof}

\section{Numerical illustrations}
\label{Sec:numerical}
We illustrate in this section the method described in the paper, \ie
\eqref{def_of_scheme_dij}-\eqref{Def_of_dij}, and discuss possible
variants.

\subsection{Invariant domain property and convergence issues}
We give in this section a counter-example showing that a method that
is formally first-order consistent and satisfies the invariant domain
property may not necessarily be convergent.

To illustrate or point, let us focus our attention on scalar
conservation equations and let us consider an algebraic approach that
is sometimes used in the literature, see \eg
\cite[p.~163]{KuzminLoehnerTurek2004},
\cite[Eq. (32)-(33)]{TurekKuzmin2002}. Instead of constructing a
convex combination involving (entropy satisfying) intermediate states
like in \eqref{Unplusone_convex_combination_dij}, we re-write
\eqref{Unplusone_convex_combination_dij_rewritten} as follows:
 \begin{equation}
  m_i \frac{\sfU_i^{n+1}-\sfU_i^n}{\dt} 
  = -\sum_{i\ne j\in \calI(S_i)} (\bef(\sfU_j^n) - \bef(\sfU_i^n)) \SCAL \bc_{ij}
  + \sum_{j\in \calI(S_i)} d_{ij}\sfU^n_j.
\end{equation}
Or, equivalently
\begin{equation}
m_i \frac{\sfU_i^{n+1}-\sfU_i^n}{\dt} 
  = -\sum_{i\ne j\in \calI(S_i)} \frac{\bef(\sfU_j^n) -\bef(\sfU_i^n)}{\sfU_j^n - \sfU_i^n} \SCAL \bc_{ij} (\sfU_j^n -\sfU_i^n)
  + \sum_{j\in \calI(S_i)} d_{ij}\sfU^n_j. \label{scheme_with_kuzmin_flux}
\end{equation}
Let us set
$k_{ij}:=\frac{\bef(\sfU_j^n) -\bef(\sfU_i^n)}{\sfU_j^n - \sfU_i^n}
\SCAL \bc_{ij}$, (with $k_{ij}:=0$ if $\sfU_j^n=\sfU_i^n$), then
\begin{equation}
\sfU_i^{n+1}
  = \sfU_i^n\Big( 1 - \frac{\dt}{m_i}\sum_{i\ne j\in \calI(S_i)}
  (-k_{ij} + d_{ij})\Big)+
\sum_{i\ne j\in \calI(S_i)} \frac{\dt}{m_i}(-k_{ij} + d_{ij})\sfU_j^n. \label{kuzmin_scheme}
\end{equation}
Let us finally set 
\begin{equation}
d_{ij}:=\max(0,k_{ij},k_{ji}), \ i\ne j, \quad \text{and}\quad
d_{ii}:= -\sum_{i\ne j\in \calI(S_i)} d_{ij}. \label{kuzmin_dij}
\end{equation}
This choice implies that $-k_{ij} + d_{ij}\ge 0$ for all 
$i\in \intset{1}{N}$, $j\in \calI(S_i)$. As a result,
$\sfU_i^{n+1} \in \text{conv}\{\sfU_j^n,\ j\in \calI(S_i)\}$ under the
appropriate CFL condition; hence, the solution process
$u_h^n \longmapsto u_h^{n+1}$ described above in
\eqref{kuzmin_scheme}-\eqref{kuzmin_dij} satisfies the maximum
principle. Although, this technique looks reasonable a priori, it
turns out that it is not diffusive enough to handle general fluxes as
discussed in \cite[\S3.3]{Guermond_Popov_2015}. The convergence result
established in \citep{Guermond_Popov_2015} requires an estimation of
the wave speed that is more accurate than just the average speed
$\bn_{ij}\SCAL \frac{\bef(\sfU_j^n) -\bef(\sfU_i^n)}{\sfU_j^n - \sfU_i^n}$,
which is invoked in the above definition. This definition of the wave
speed is correct in shocks, \ie if the Riemann problem with data
$(\sfU_i,\sfU_j)$ is a simple shock; but it may not be sufficient if the
Riemann solution is an expansion or a composite wave, which is
likely to be the case if $\bef$ is not convex.

We now illustrate numerically the observation made above.
We consider the so-called KPP problem proposed in \cite{KPP_2007}.
It is a two-dimensional scalar conservation equation with a non-convex
flux:
\begin{equation}
  \partial_t u + \nabla \cdot \bef(u) = 0, \quad
  u(\bx,0) = u_0(\bx)= \left\{
    \begin{aligned}
      &\tfrac{14\pi}{4}, \quad \mbox{if } \sqrt{x^2+y^2} \leq 1, \\
      &\tfrac{\pi}{4}, \quad \mbox{otherwise}.
    \end{aligned}
  \right.,
\end{equation}
where $\bef(u) = (\sin u, \cos u)$. This is a challenging test case
for many high-order numerical schemes because the solution has a
two-dimensional composite wave structure. For example, it has been
shown in \citep{KPP_2007} that some central-upwind schemes based on
WENO5, Minmod 2 and SuperBee reconstructions converge to non-entropic
solutions.

The computational domain $[-2,2]\CROSS[-2.5,1.5]$ is triangulated
using non-uniform meshes and the solution is approximated up to $t=1$
using continuous $\polP_1$ finite elements (29871 nodes, 59100
triangles). The time stepping is done with SSP RK3. The solution shown
in the left panel of Figure~\ref{fig:KPP} is obtained using
\eqref{kuzmin_dij} for the definition of $d_{ij}$.  The numerical
solution produces  very sharp, non-oscillating, entropy violating shocks, the reason
being that the artificial viscosity is not large enough.  Note that
the solution is maximum principle satisfying (the local maximum
principle is satisfied at every grid point and every time step) and no
spurious oscillations are visible. The numerical process converges to
a nice-looking (wrong) piecewise smooth weak solution.  The numerical
solution shown in the right panel of Figure~\ref{fig:KPP} is obtained
by using our definition of $d_{ij}$, \eqref{Def_of_dij} (note in passing that the results
obtained with \eqref{def_of_scheme}-\eqref{Def_of_nuh_graph} together
with \eqref{Def_of_dij} are indistinguishable from this solution).  The
expected helicoidal composite wave is clearly visible; this is the
unique entropy satisfying solution.
\begin{figure}[H]
\centering{
 \includegraphics[width=0.40\textwidth]{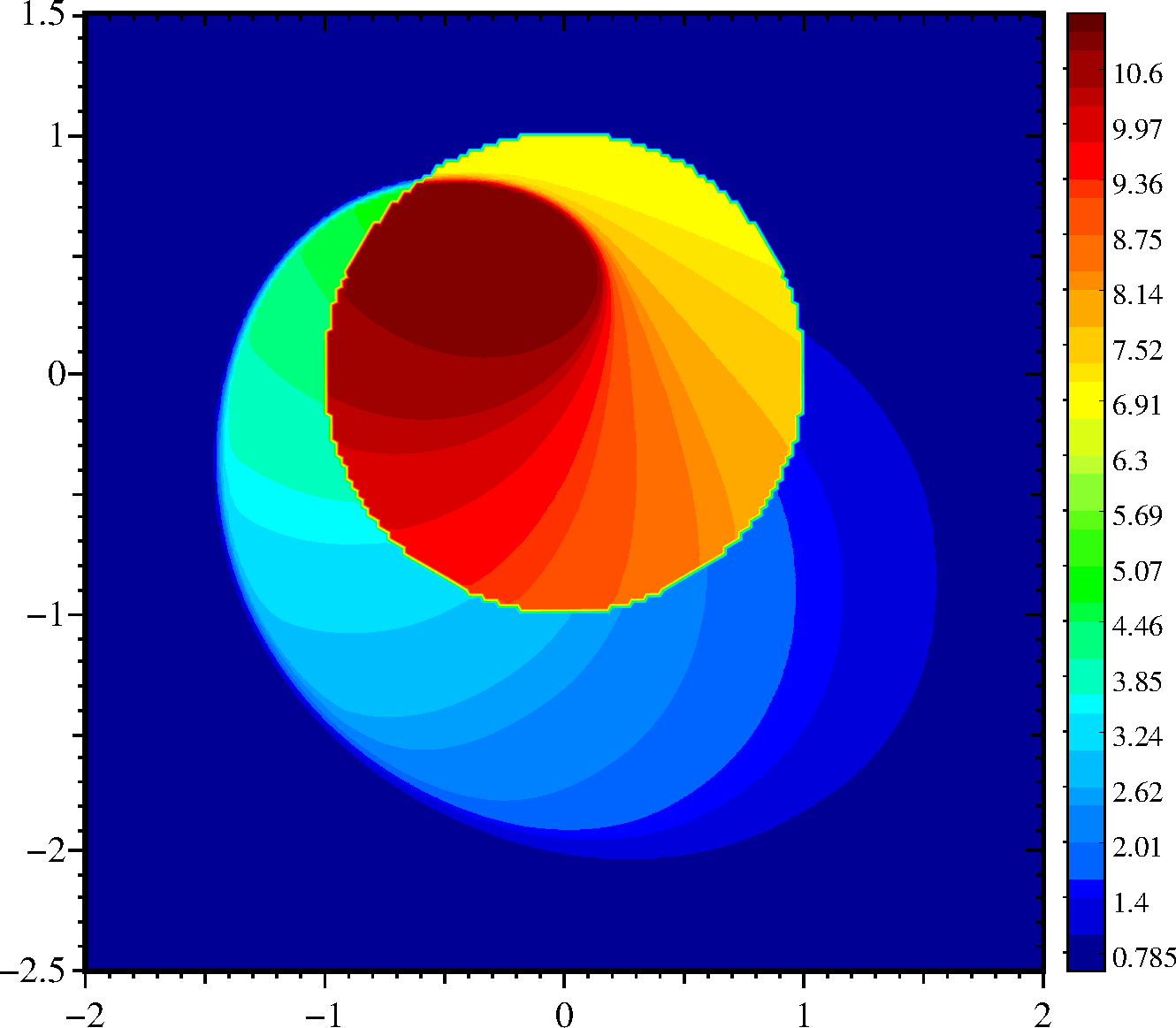}\hfil
 \includegraphics[width=0.40\textwidth]{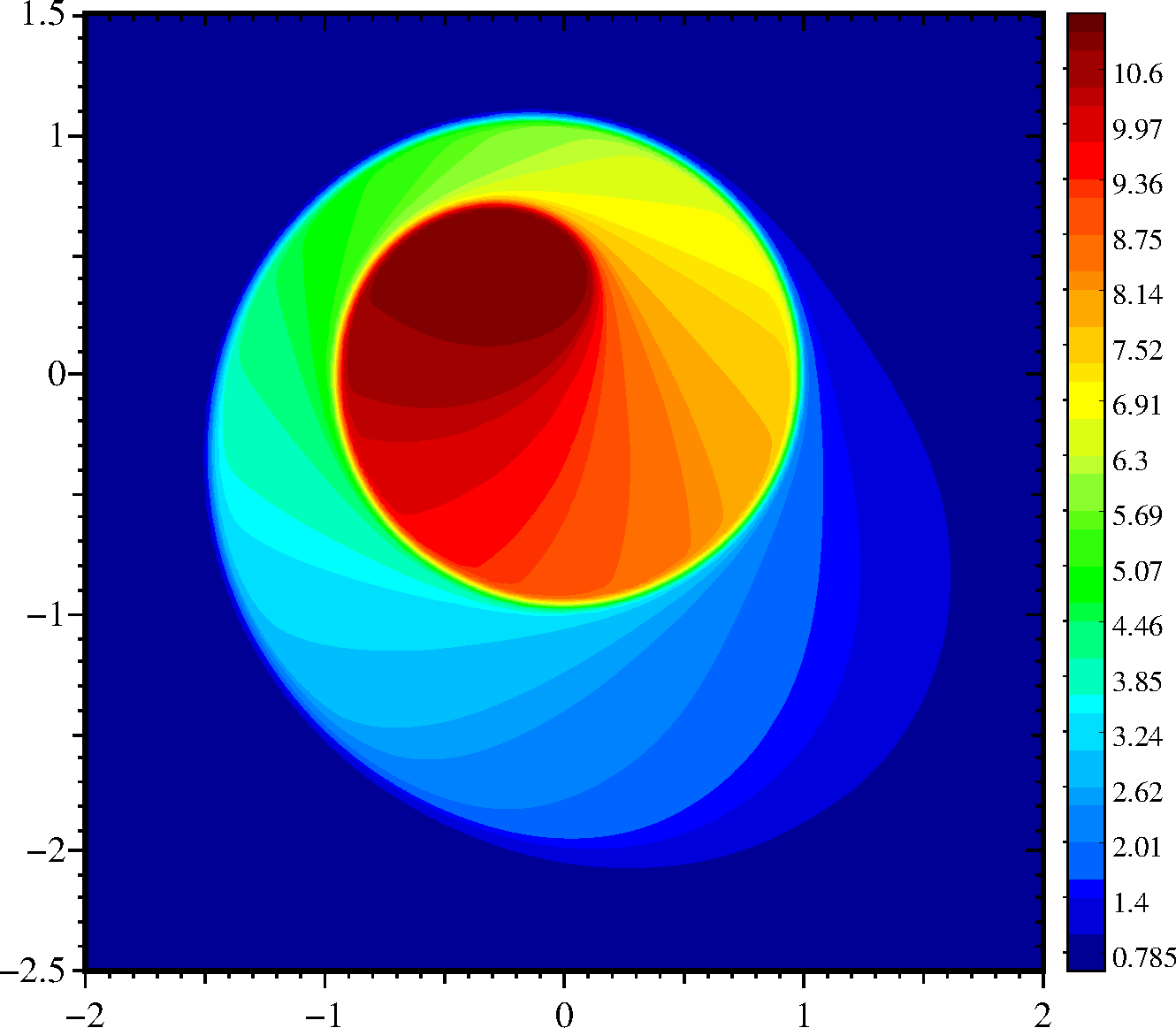}
}
\caption{KPP solution with continuous $\polP_1$ elements (29871 nodes,
  59100 triangles). Left: entropy violating solution using
  \eqref{def_of_scheme_dij}-\eqref{kuzmin_dij}; Right: entropy
  satisfying solution using
  \eqref{def_of_scheme_dij}-\eqref{Def_of_dij}.}\label{fig:KPP}
\end{figure}

In conclusion, the above counter-example shows that satisfying the invariant
domain property/maximum principle does not imply convergence, even for
a first-order method. It is also essential that the method 
satisfies local entropy inequalities to be convergent; this is the case of our method
\eqref{def_of_scheme_dij}-\eqref{Def_of_dij} (see
Theorem~\ref{Thm:entrop_ineq}), but it is not the case of the algebraic
method \eqref{kuzmin_scheme}-\eqref{kuzmin_dij}.

\begin{remark} The reader should be aware that we are citing
  \cite[p.~163]{KuzminLoehnerTurek2004},
  \cite[Eq. (32)-(33)]{TurekKuzmin2002} a little bit out of context.
  The scheme as originally presented in the above references was only
  meant to solve the linear transport equation, and as such it is a
  perfectly good method. Problems  arise with \eqref{kuzmin_dij} only
  when one extends the methodology to nonlinear nonconvex fluxes, as
  we did in \eqref{scheme_with_kuzmin_flux}.
\end{remark}

\subsection{Special meshes}
The construction of the intermediate states in
\eqref{Unplusone_convex_combination_dij_rewritten} is not unique.  For
instance we can extend a construction used by 
\cite[Cor. 1]{Hoff_1979}
in one space dimension for the $p$-system.
Let us assume that $i\in \{1,\ldots, N\}$ is such that every
$j\in \calI(S_i)\setminus\{i\}$, there is a unique $\sigma_i(j)\in
\calI(S_i)\setminus\{i,j\}$ such that $\bc_{ij}:=\int_{S_i} \phi_i
\GRAD \phi_j \diff x = -\int_{S_i} \phi_i \GRAD \phi_{\sigma_i(j)}
\diff x =: -\bc_{i\sigma_i(j)}$. This property holds in one space
dimension for any mesh if $\ba_i$ is an interior node. It holds in
higher space dimension provided the mesh has symmetry properties and
$\ba_i$ is an interior node; for instance it holds if the mesh is
centrosymmetric, \ie the support of $\phi_i$ is symmetric with respect
to the node $\ba_i$ for any $i\in\{1,\ldots,N\}$. Then we can re-write
\eqref{update_cijdij_dij} as follows:
\begin{equation}
  m_i \frac{\bsfU_i^{n+1}-\bsfU_i^n}{\dt} 
  = d_{ii}\bsfU^n_i - \sum_{j\in \calJ(S_i)} (\bef(\bsfU_j^n) - \bef(\bsfU_{\sigma_i(j)}^n)) \SCAL \bc_{ij}
  + d_{ij}\bsfU^n_j+ d_{i\sigma_i(j)}\bsfU^n_{\sigma_i(j)}.  
\end{equation}
where the set $\calJ(S_i)\subset \calI(S_i)$ is such that $\sigma_i : \calJ(S_i)
\longrightarrow \sigma_i(\calJ(S_i))$ is bijective and 
$\calJ(S_i) \cup \sigma_i(\calJ(S_i)) = \calI(S_i)\setminus\{i\}$. Then 
upon recalling that $d_{ii} := -\sum_{j\in \calJ(S_i)}(d_{ij}+d_{i\sigma_i(j)})$,
we have
\begin{equation}
\bsfU_i^{n+1} = \bsfU_i^n\bigg(1 
-\sum_{j\in \calJ(S_i)}\frac{\dt}{m_i}(d_{ij}+d_{i\sigma_i(j)}) \bigg) 
+ \sum_{j\in \calJ(S_i)} \frac{\dt (d_{ij}+d_{i\sigma_i(j)})}{m_i}
\overline\bsfU_{ij}^{n+1},
\end{equation}
where we have defined  the intermediate state $\overline\bsfU_{ij}^{n+1}$
by
\begin{equation}
  \overline\bsfU_{ij}^{n+1} = 
 \frac{d_{i\sigma_i(j)}}{d_{ij}+d_{i\sigma_i(j)}}\bsfU_{\sigma_i(j)}^n
+\frac{d_{ij}}{d_{ij}+d_{i\sigma_i(j)}} \bsfU_j^n 
    - (\bef(\bsfU_j^n) - \bef(\bsfU_{\sigma_i(j)}^n)) 
    \SCAL \frac{\bc_{ij}}{d_{ij}+d_{i\sigma_i(j)}}.
\end{equation}
The state $\overline\bsfU_{ij}^{n+1}$ is of the form
$\overline\bu(t,\bn_{ij},\bsfU_{\sigma_i(j)}^{n},\bsfU_{j}^{n}) :=
\int_{\alpha_L}^{\alpha_R}
\bu(\bn_{ij},\bsfU_{\sigma_i(j)}^{n},\bsfU_{j}^{n})(x,t) \diff x$,
where $\alpha_L= -\frac{d_{i\sigma_i(j)}}{d_{ij}+d_{i\sigma_i(j)}}$,
$\alpha_R= \frac{d_{ij}}{d_{ij}+d_{i\sigma_i(j)}}$ and $t:=
\frac{\|\bc_{ij}\|_{\ell^2}}{d_{ij}+d_{i\sigma_i(j)}}$, provided
\begin{align}
  d_{i\sigma_i(j)} &\ge
  (\lambda_1^-)^-(\bn_{ij},\bsfU_{\sigma_i(j)}^n, \bsfU_j^n)
  \| \bc_{ij}\|_{\ell^2}, && \forall j \in \calJ(S_i), \\
  d_{ij} &\ge (\lambda_m^+)^+(\bn_{ij},\bsfU_{\sigma_i(j)}^n,
  \bsfU_j^n) \| \bc_{ij}\|_{\ell^2}, && \forall j \in \calJ(S_i),
\end{align}
where we defined $x^+ = \max(x,0)$ and $x^-=-\min(x,0)$.  A sufficient
condition that implies both the above inequalities and is independent
of the choice of the set $\calJ_i(S_i)$ is
\begin{equation}
\min(d_{ij},d_{i\sigma_i(j)}) \ge 
\lambda_{\max}(\bn_{ij},\bsfU_{\sigma_i(j)}^n, \bsfU_j^n)\| \bc_{ij}\|_{\ell^2},
\qquad j \in \calJ(S_i).
\end{equation}

Note that the above argument holds only if $\ba_i$ is an interior node
satisfying the symmetry property $\bc_{ij}=-\bc_{i\sigma_i(j)}$.  If
this is not the case, then we can always use the lower bound
\eqref{lower_bound_on_dij}, \ie $d_{ij} \ge
\lambda_{\max}(\bn_{ij},\bsfU_{i}^n,\bsfU_{j}^n)
\|\bc_{ij}\|_{\ell^2}$.

In conclusion the diffusion matrix $(d_{ij})_{1\le i,j\le N}$ can be
constructed as follows: (1) For every node $i$ satisfying the symmetry
property $\bc_{ij}=-\bc_{i\sigma_i(j)}$ for every $j\in \calJ(S_i)$,
we define $\widetilde{d}_{ij}=\widetilde{d}_{i\sigma_i(j)}
=\lambda_{\max}(\bn_{ij},\bsfU_{\sigma_i(j)}^n, \bsfU_j^n)\|
\bc_{ij}\|_{\ell^2}$; (2) For every other index $i$ not satisfying the
symmetry property mentioned above, we define $\widetilde{d}_{ij} =
\lambda_{\max}(\bn_{ij},\bsfU_{i}^n,\bsfU_{j}^n)
\|\bc_{ij}\|_{\ell^2}$; (3) We construct the diffusion matrix by
setting $d_{ij} := \max(\widetilde{d}_{ij},\widetilde{d}_{ji})$ for
$j\ne i$ and $d_{ii} := -\sum_{i\ne j\in \calI(S_i)} d_{ij}$.  This
construction guarantees conservation, \ie $\sum_{i\in \calI(S_j)}
d_{ij} =0$ and first-order consistency, \ie $\sum_{j\in \calI(S_i)}
d_{ij} =0$.

\begin{remark}
  Quite surprisingly, in the case of scalar linear transport the above
  construction and the construction done in
  \S\ref{Sec:convex_argument}, (see definition \eqref{Def_of_dij})
  give the same scheme (\ie the same CFL).
\end{remark}

\subsection{Invariant domain property vs. monotonicity}
We show in this section that the invariance property and what is usually
understood in the literature as monotonicity are two different
concepts and just looking at monotonicity may be misleading.
\subsection{p-system}
We consider the p-system and solve the Riemann problem corresponding
to the initial data $(v_L,u_L)=(1,0)$, $(v_R,u_R)=
(2^{\frac{2}{\gamma-1}},\tfrac{1}{\gamma-1})$.  The computational
domain is the segment $[0,1]$ and the separation between the left and
right states is set at $x_0=0.75$.  The solution is a single rarefaction
wave from the first family (\ie $w_1(v_L,u_L) = w_1(v_R,u_R)$):
\begin{equation}
v(x,t) = \begin{cases}
1 & \text{if $\frac{x-x_0}{t} \le -1$} \\
(\frac{x_0-x}{t})^{\frac{-2}{\gamma+1}}
 &\text{if $-1 \le \frac{x-x_0}{t} \le -2^{-\frac{\gamma+1}{\gamma-1}}$} \\
 2^{\frac{2}{\gamma-1}} & \text{otherwise}
\end{cases}
\end{equation}
\begin{equation}
u(x,t) = \begin{cases}
0 & \text{if $\frac{x-x_0}{t} \le -1$} \\
\frac{2}{\gamma-1}\left(1-(\frac{x_0-x}{t})^{\frac{\gamma-1}{\gamma+1}}\right) 
 &\text{if $-1 \le \frac{x-x_0}{t} \le -2^{-\frac{\gamma+1}{\gamma-1}}$} \\
\frac{1}{\gamma-1} & \text{otherwise}
\end{cases}
\end{equation}
This case is such that $(v^*,u^*)=(v_R,u_R)$, hence the second wave
corresponding to the eigenvalues $\lambda_2^{\pm}$ is
not present.  We use continuous piecewise linear finite elements with
the algorithm~\eqref{def_of_scheme_dij}-\eqref{Def_of_dij}. The time
stepping is done with the SSP RK3 technique. We show the profile of
$v$ at $t=0.75$ in Figure~\ref{Fig:p-system} for meshes composed of
$10^3,2\CROSS 10^3, 4\CROSS 10^3, 10^4, 2\CROSS 10^4, 4\CROSS 10^4$,
$10^5$, $2\CROSS 10^5$ cells. 
\begin{figure}[ht]
\centerline{\includegraphics[width=0.33\textwidth]{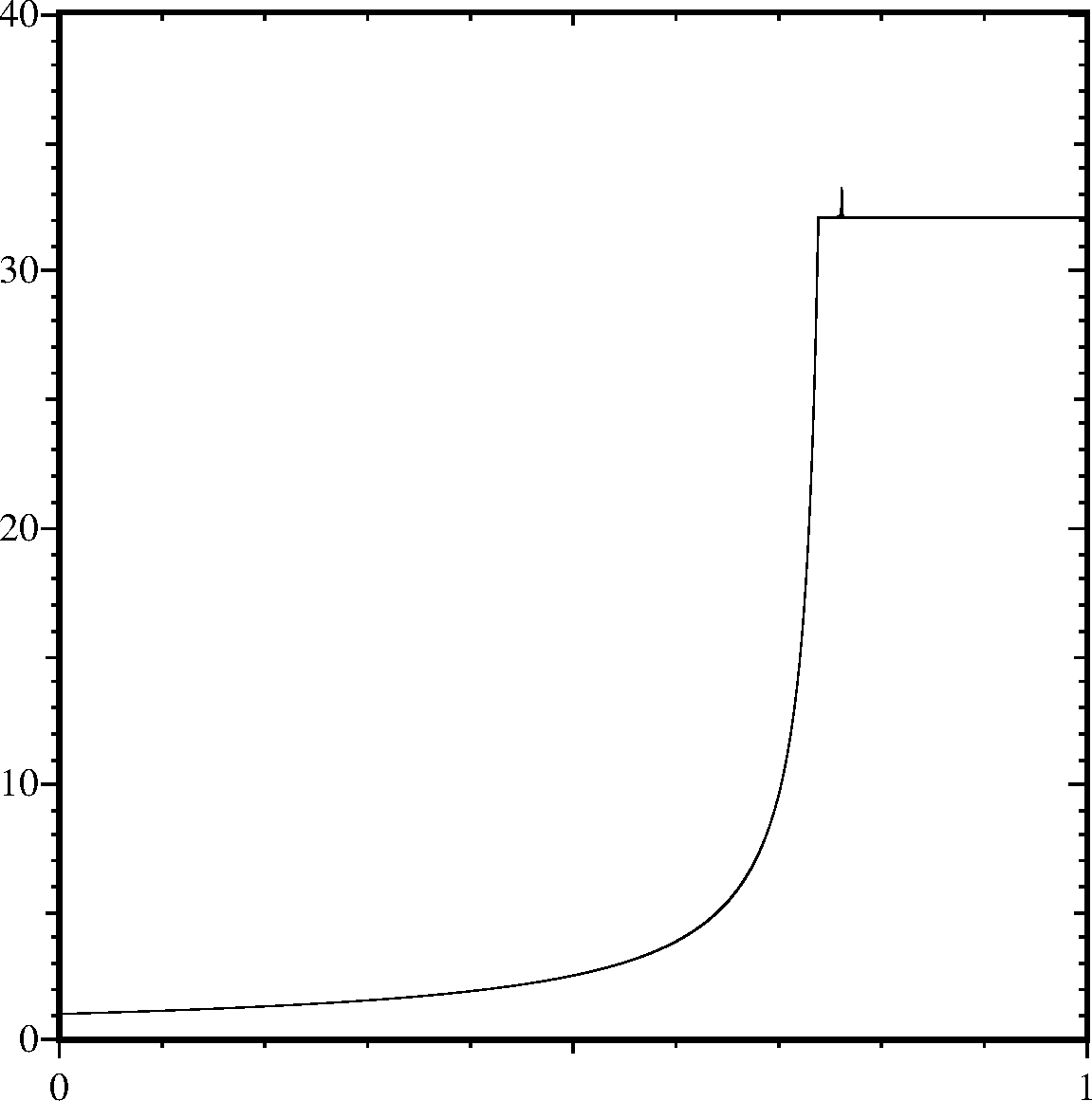}\hfil
\includegraphics[width=0.33\textwidth]{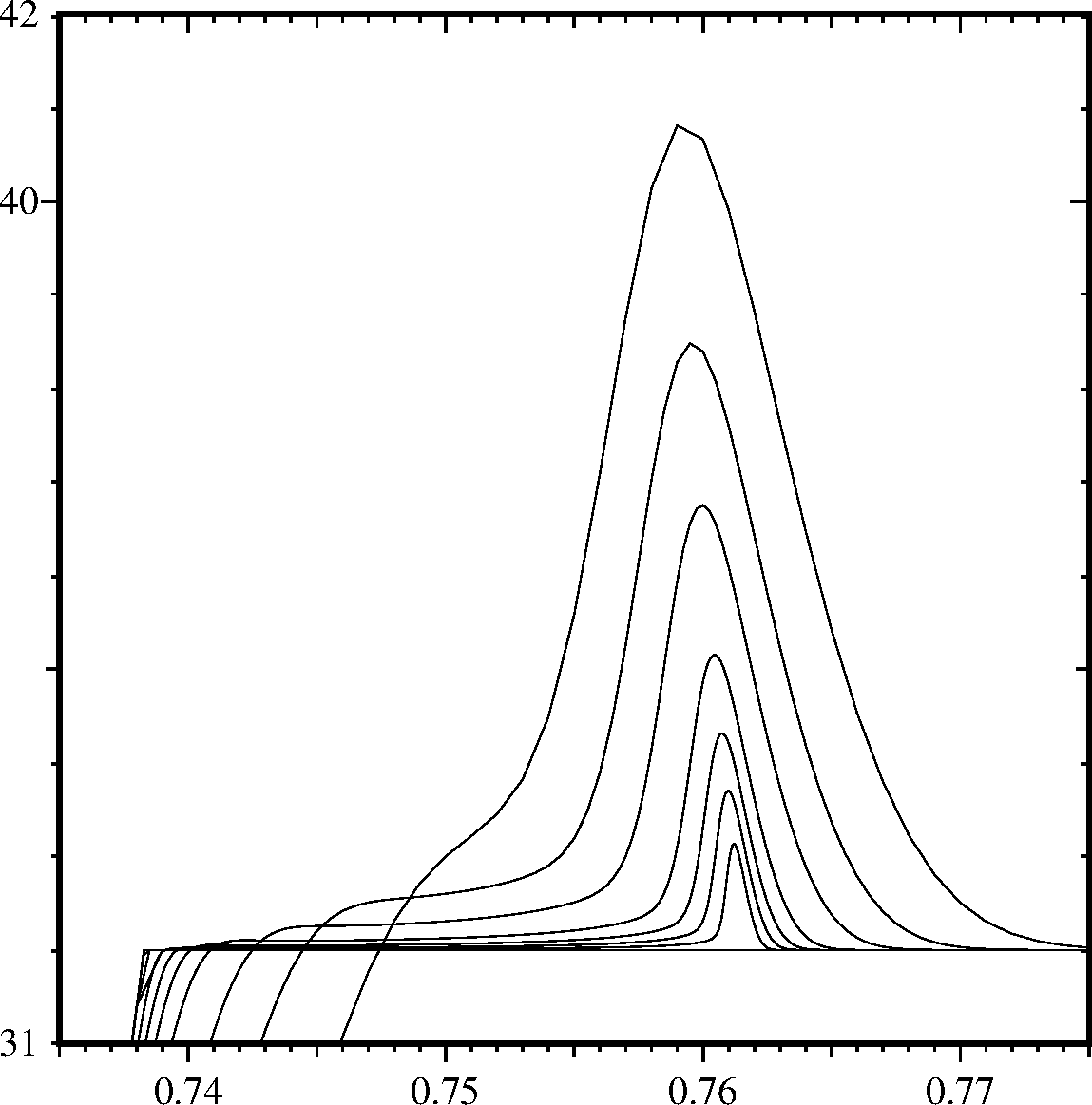}
}
\caption{Left: $v$-profile for the p-system at $t=0.75$, $10^5$ grid points.
  Right: close up view of the $v$-profile for various grid sizes:
$10^3,2\CROSS 10^3, 4\CROSS 10^3, 10^4, 2\CROSS 10^4, 4\CROSS 10^4$,
$10^5$ grid points.}
\label{Fig:p-system}
\end{figure}
We observe that the profile is not monotone. There is an overshoot at
the right of the foot of the (left-going) wave. Actually this
overshoot does not violate the invariant domain property; we have
verified numerically that, at every time step and for every grid point
in each mesh, the numerical solution is in the smallest invariant
domain of type \eqref{speed_upper_bound_p_system} that contains the
piecewise linear approximation of the initial data. This result seems
a bit surprising, but it is perfectly compatible with
Theorem~\ref{Thm:UL_is_invariant}. Since the numerical solution cannot
stay on the exact rarefaction wave (green line connecting $\bsfU_L$
and $\bsfU_L$ in Figure~\ref{fig:Overshoot}), the second wave
reappears in the form of an overshoot at the end of the rarefaction
wave (see right panel of the Figure~\ref{Fig:p-system}).
\begin{figure}[ht]
\centering{
 \includegraphics[width=0.55\textwidth]{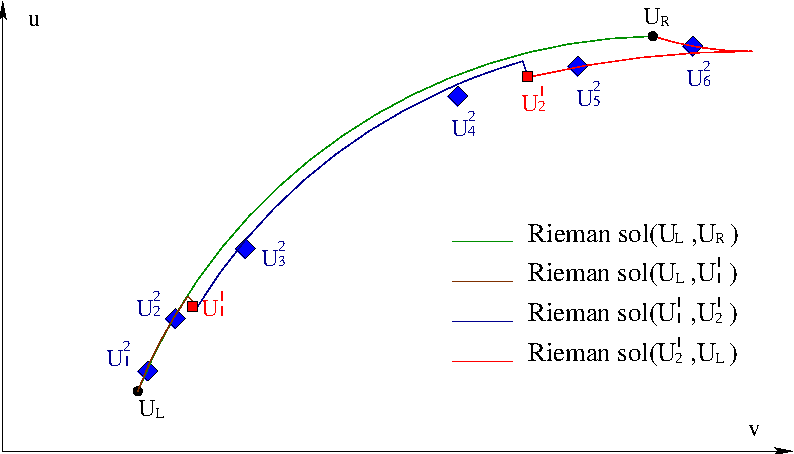}
}
\caption{The overshooting mechanism for a single rarefaction wave in
  the phase space for the p-system. Initial data in black; additional points after one
  time step in red; after two time steps in blue. Observe the position
  of $\bsfU^2_6$.}
\label{fig:Overshoot}
\end{figure}

Let $(\bsfU_L\dots,\bsfU_L,\bsfU_R\dots,\bsfU_R)$ be the initial
sequence of degrees of freedom. After one time step two additional
points appear in the phase space, denoted on
Figure~\ref{fig:Overshoot} by $\bsfU_1^1$ and $\bsfU_2^1$. Because of
the invariant domain property, these points are under the rarefaction
wave. Then the sequence of degrees of freedom at time $t=\dt$ is
$(\bsfU_L\dots,\bsfU_L,\bsfU^1_1,\bsfU^2_2, \bsfU_R\dots, \bsfU_R)$.
Six additional points $\bsfU^2_1, \ldots, \bsfU^2_6$ appear after two
time steps and the sequence of degrees of freedom at time $t=2\dt$ is
$(\bsfU_L\dots,\bsfU_L,\bsfU^2_1, \ldots, \bsfU^2_6,
\bsfU_R\dots,\bsfU_R)$.  The point $\bsfU^2_6$ is the one whose
$v$-component may overshoot because the exact solution of the Riemann
problem with the left state $\bsfU^1_2$ and the right state $\bsfU_R$
is composed of two rarefaction waves and the maximum value of $v$ on
these rarefactions is necessarily larger than $v_R$ (see red line in
Figure~\ref{fig:Overshoot}).  Note that this is not a Gibbs phenomenon
at all; in particular the amplitude of the overshoot decreases as the
mesh is refined as shown in the close up view in the right panel of
the Figure~\ref{Fig:p-system}. This phenomenon is actually very common
in numerical simulations of hyperbolic systems but is rarely
discussed; it is sometimes called "start up error" in the literature,
see for example the comments on page 592 in
\cite{Kurganov_Tadmor_2002} and the comments at the bottom of page
1005 in \cite{Liska_Wendroff}.
The (relative) $L^1$-norm of the error on both $v$ and $u$ at $t=0.75$
is shown in Table~\ref{Table:converge_psystem}.  The method converges
with an order close to $0.9$.
\begin{table}[ht]
\centerline{\begin{tabular}{||c|c|c|c|c||} \hline
$1/h$           & $v$        & rate  & $u$        & rate\\ \hline
$10^3$          & 1.8632(-2) & -     & 7.2261(-3) & \\ \hline
$2\CROSS 10^3$  & 1.0350(-2) & 0.85  & 3.9239(-3) & 0.88\\ \hline
$4\CROSS 10^3$  & 5.6769(-3) & 0.87  & 2.1173(-3) & 0.89\\ \hline
$10^4$          & 2.5318(-3) & 0.88  & 9.2888(-4) & 0.90\\ \hline
$2\CROSS 10^4$  & 1.3644(-3) & 0.89  & 4.9541(-4) & 0.91\\ \hline
$4\CROSS 10^4$  & 7.3151(-4) & 0.90  & 2.6319(-4) & 0.91\\ \hline
$1\CROSS 10^5$  & 2.9695(-4) & 0.98  & 1.1352(-4) & 0.92\\ \hline
$2\CROSS 10^5$  & 1.5838(-4) & 0.91  & 5.9869(-5) & 0.92\\ \hline
\end{tabular}}
\caption{Convergence rates for the p-system}
\label{Table:converge_psystem}
\end{table}

\subsection{Euler in 1D (Leblanc shocktube)}
We consider now the compressible Euler equations. We solve the Riemann
problem also known in the literature as the Leblanc Shocktube. The data
are as follows: $\gamma=\frac53$ and
\begin{align*}
\rho_L & = 1.000,\quad u_L=0.0,\quad p_L = 0.1\\
\rho_R & = 0.001,\quad u_R=0.0,\quad p_R= 10^{-15}.
\end{align*}
\begin{figure}[h]
\centerline{\includegraphics[width=0.54\textwidth]{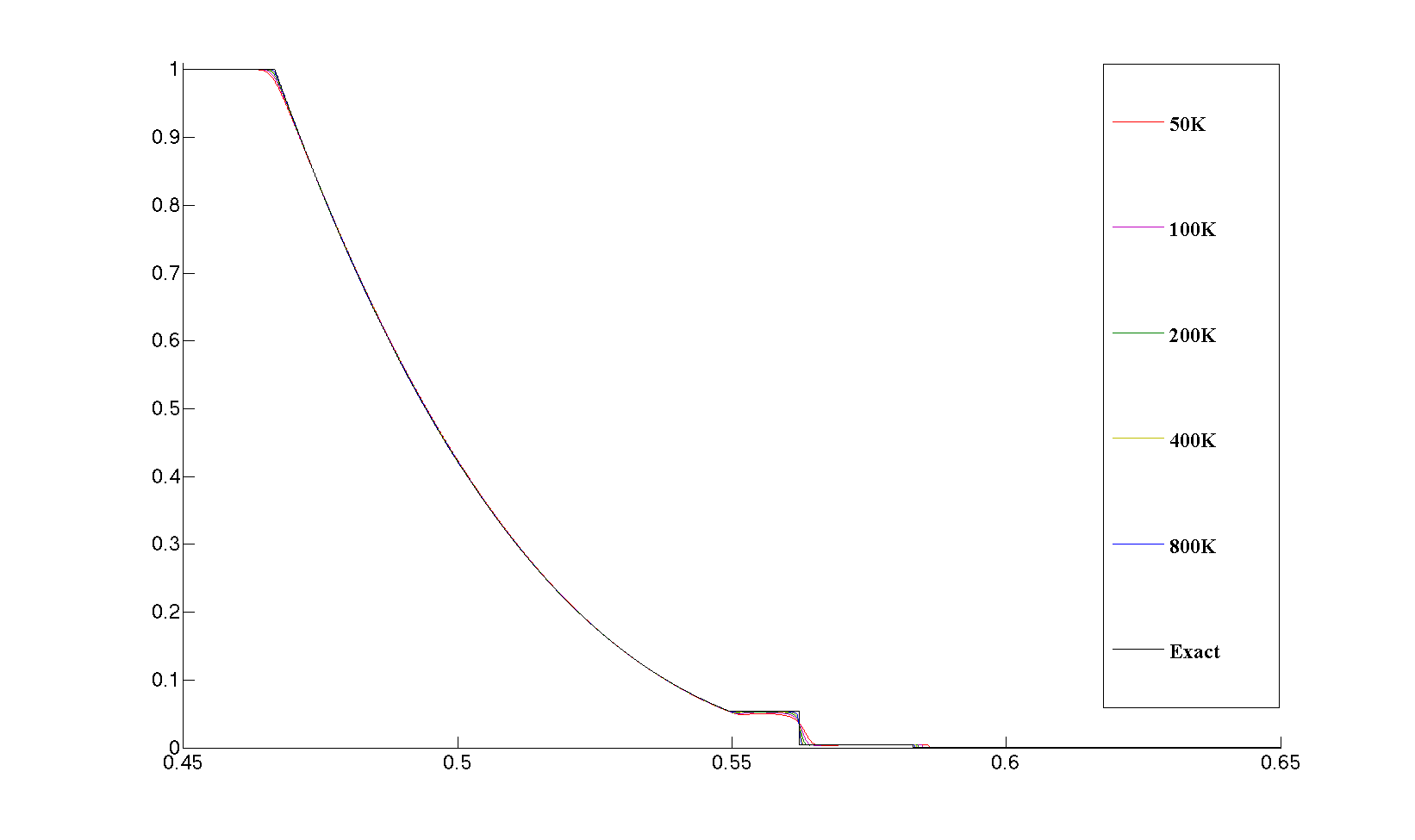}\hfil
\includegraphics[width=0.54\textwidth]{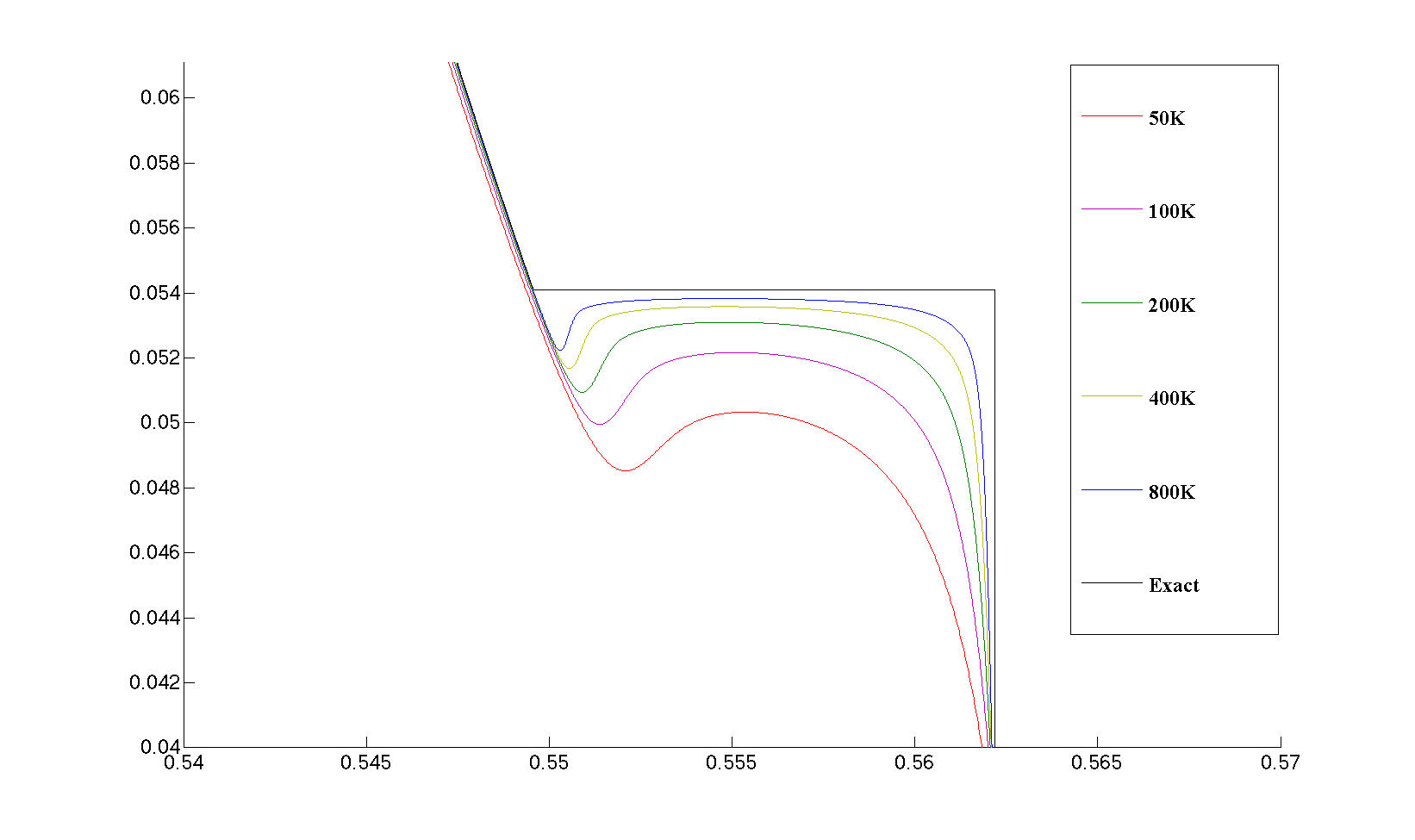}}
\caption{Left: Density profile for the Leblanc Shocktube at $t=0.1$.
  Right: close up view of the density profile at the foot of the
  rarefaction wave.}
\label{Fig:leblanc}
\end{figure}
The structure of the solution is standard; it consists of a
rarefaction wave moving to the left, a contact discontinuity in the
middle and a shock moving to the right. The density profile is
monotone. We solve this problem with the
algorithm~\eqref{def_of_scheme_dij}-\eqref{Def_of_dij} using piecewise
linear finite elements. The density profile computed with $50{,}000$,
$100{,}000$, $200{,}000$, $400{,}000$ and $800{,}000$ grid points is
shown in the left panel of Figure~\ref{Fig:leblanc}.  The right panel
in the figure shows a close up view of the region at the foot of the
expansion wave.  Of course the scheme does not have any problem with
the positivity of the density and the internal energy, but we observe
that the numerical profile is not monotone; there is a small dip at
the foot of the expansion. There is nothing wrong here, since, for
each mesh, the numerical solution is guaranteed by
Theorem~\ref{Thm:UL_is_invariant} to be in the smallest convex
invariant set that contains the Riemann data.  This phenomenon is
similar to what has been observed for the p-system in the previous
section. This example shows again that the invariant domain property
is a different concept than monotonicity, and just looking at
monotonicity is not enough to understand hyperbolic systems.

\section{Concluding remarks}
We have proposed a numerical method to solve hyperbolic systems using
continuous finite elements and forward Euler time stepping. The
properties of the method are based on the introduction of an
artificial dissipation that is defined so that any convex invariant
sets is an invariant domain for the method. The main result of the
paper are Theorem~\ref{Thm:UL_is_invariant} and
Theorem~\ref{Thm:entrop_ineq}. The method is formally first-order
accurate with respect to space and can be made higher-order with respect
to the time step by using any explicit Strong Stability Preserving
time stepping technique. Although, the argumentation of the proof of
Theorem~\ref{Thm:UL_is_invariant} relies on the notion of Riemann
problems, the algorithm does not require to solve any Riemann problem.
The only information needed is an upper bound on the local maximum
speed. Our next objective is to work on a generalization of the FCT
technique (see \cite{KuzminLoehnerTurek2004}) to make the method at
least formally second-order accurate in space and still be domain
invariant.


\begin{thebibliography}{28}
\providecommand{\natexlab}[1]{#1}
\providecommand{\url}[1]{\texttt{#1}}
\expandafter\ifx\csname urlstyle\endcsname\relax
  \providecommand{\doi}[1]{doi: #1}\else
  \providecommand{\doi}{doi: \begingroup \urlstyle{rm}\Url}\fi

\bibitem[Bianchini and Bressan(2005)]{Bianchini_Bressan_2005}
S.~Bianchini and A.~Bressan.
\newblock Vanishing viscosity solutions of nonlinear hyperbolic systems.
\newblock \emph{Ann. of Math. (2)}, 161\penalty0 (1):\penalty0 223--342, 2005.

\bibitem[Bressan(2000)]{Bressan_2000}
A.~Bressan.
\newblock \emph{Hyperbolic systems of conservation laws}, volume~20 of
  \emph{Oxford Lecture Series in Mathematics and its Applications}.
\newblock Oxford University Press, Oxford, 2000.
\newblock The one-dimensional Cauchy problem.

\bibitem[Chorin(1976)]{Chorin_1976}
A.~J. Chorin.
\newblock Random choice solution of hyperbolic systems.
\newblock \emph{J. Computational Phys.}, 22\penalty0 (4):\penalty0 517--533,
  1976.

\bibitem[Chueh et~al.(1977)Chueh, Conley, and Smoller]{Chueh_Conley_Smoller}
K.~N. Chueh, C.~C. Conley, and J.~A. Smoller.
\newblock Positively invariant regions for systems of nonlinear diffusion
  equations.
\newblock \emph{Indiana Univ. Math. J.}, 26\penalty0 (2):\penalty0 373--392,
  1977.

\bibitem[Colella(1990)]{Colella_1990}
P.~Colella.
\newblock Multidimensional upwind methods for hyperbolic conservation laws.
\newblock \emph{J. Comput. Phys.}, 87\penalty0 (1):\penalty0 171--200, 1990.

\bibitem[Crandall and Majda(1980)]{Crandall_Majda}
M.~G. Crandall and A.~Majda.
\newblock Monotone difference approximations for scalar conservation laws.
\newblock \emph{Math. Comp.}, 34\penalty0 (149):\penalty0 1--21, 1980.

\bibitem[Dafermos(2000)]{Dafermos_2000}
C.~M. Dafermos.
\newblock \emph{Hyperbolic conservation laws in continuum physics}, volume 325
  of \emph{Grundlehren der Mathematischen Wissenschaften [Fundamental
  Principles of Mathematical Sciences]}.
\newblock Springer-Verlag, Berlin, 2000.

\bibitem[Frid(2001)]{Frid_2001}
H.~Frid.
\newblock Maps of convex sets and invariant regions for finite-difference
  systems of conservation laws.
\newblock \emph{Arch. Ration. Mech. Anal.}, 160\penalty0 (3):\penalty0
  245--269, 2001.

\bibitem[Guermond and Nazarov(2013)]{Guermond_Nazarov_2013}
J.-L. Guermond and M.~Nazarov.
\newblock A maximum-principle preserving ${C}^0$ finite element method for
  scalar conservation equations.
\newblock \emph{Comput. Methods Appl. Mech. Engrg.}, 272:\penalty0 198--213,
  2013.

\bibitem[Guermond and Popov(2015{\natexlab{a}})]{Guermond_Popov_2015}
J.-L. Guermond and B.~Popov.
\newblock Error estimates of a first-order lagrange finite element technique
  for nonlinear scalar conservation equations.
\newblock \emph{SIAM J. Numer. Anal.}, 2015{\natexlab{a}}.
\newblock in review.

\bibitem[Guermond and
  Popov(2015{\natexlab{b}})]{Guermond_Popov_Fast_Riemann_2015}
J.-L. Guermond and B.~Popov.
\newblock Fast estimation of the maximum speed in a riemann problem.
\newblock 2015{\natexlab{b}}.
\newblock Submitted.

\bibitem[Hoff(1979)]{Hoff_1979}
D.~Hoff.
\newblock A finite difference scheme for a system of two conservation laws with
  artificial viscosity.
\newblock \emph{Math. Comp.}, 33\penalty0 (148):\penalty0 1171--1193, 1979.

\bibitem[Hoff(1985)]{Hoff_1985}
D.~Hoff.
\newblock Invariant regions for systems of conservation laws.
\newblock \emph{Trans. Amer. Math. Soc.}, 289\penalty0 (2):\penalty0 591--610,
  1985.

\bibitem[Jameson(1995)]{Jameson_1995}
A.~Jameson.
\newblock Positive schemes and shock modelling for compressible flows.
\newblock \emph{Internat. J. Numer. Methods Fluids}, 20\penalty0
  (8-9):\penalty0 743--776, 1995.
\newblock Finite elements in fluids---new trends and applications (Barcelona,
  1993).

\bibitem[Kurganov and Tadmor(2002)]{Kurganov_Tadmor_2002}
A.~Kurganov and E.~Tadmor.
\newblock Solution of two-dimensional {R}iemann problems for gas dynamics
  without {R}iemann problem solvers.
\newblock \emph{Numer. Methods Partial Differential Equations}, 18\penalty0
  (5):\penalty0 584--608, 2002.

\bibitem[Kurganov et~al.(2007)Kurganov, Petrova, and Popov]{KPP_2007}
A.~Kurganov, G.~Petrova, and B.~Popov.
\newblock Adaptive semidiscrete central-upwind schemes for nonconvex hyperbolic
  conservation laws.
\newblock \emph{SIAM Journal on Scientific Computing}, 29\penalty0
  (6):\penalty0 2381--2401, 2007.

\bibitem[Kuzmin and Turek(2002)]{TurekKuzmin2002}
D.~Kuzmin and S.~Turek.
\newblock Flux correction tools for finite elements.
\newblock \emph{Journal of Computational Physics}, 175\penalty0 (2):\penalty0
  525--558, 2002.

\bibitem[Kuzmin et~al.(2005)Kuzmin, L\"{o}hner, and
  Turek]{KuzminLoehnerTurek2004}
D.~Kuzmin, R.~L\"{o}hner, and S.~Turek.
\newblock \emph{Flux--Corrected Transport}.
\newblock Scientific Computation. Springer, 2005.
\newblock 3-540-23730-5.

\bibitem[Lax(1957)]{Lax_1957_II}
P.~D. Lax.
\newblock Hyperbolic systems of conservation laws. {II}.
\newblock \emph{Comm. Pure Appl. Math.}, 10:\penalty0 537--566, 1957.

\bibitem[Lions et~al.(1996)Lions, Perthame, and
  Souganidis]{Lions_Perthame_Souganidis_1996}
P.-L. Lions, B.~Perthame, and P.~E. Souganidis.
\newblock Existence and stability of entropy solutions for the hyperbolic
  systems of isentropic gas dynamics in {E}ulerian and {L}agrangian
  coordinates.
\newblock \emph{Comm. Pure Appl. Math.}, 49\penalty0 (6):\penalty0 599--638,
  1996.

\bibitem[Liska and Wendroff(2003)]{Liska_Wendroff}
R.~Liska and B.~Wendroff.
\newblock Comparison of several difference schemes on 1{D} and 2{D} test
  problems for the {E}uler equations.
\newblock \emph{SIAM J. Sci. Comput.}, 25\penalty0 (3):\penalty0 995--1017
  (electronic), 2003.

\bibitem[Liu(1975)]{Liu_1975}
T.~P. Liu.
\newblock The {R}iemann problem for general systems of conservation laws.
\newblock \emph{J. Differential Equations}, 18:\penalty0 218--234, 1975.

\bibitem[Nishida(1968)]{Nishida_1968}
T.~Nishida.
\newblock Global solution for an initial boundary value problem of a
  quasilinear hyperbolic system.
\newblock \emph{Proc. Japan Acad.}, 44:\penalty0 642--646, 1968.

\bibitem[Osher(1983)]{Osher_1983}
S.~Osher.
\newblock The {R}iemann problem for nonconvex scalar conservation laws and
  {H}amilton-{J}acobi equations.
\newblock \emph{Proc. Amer. Math. Soc.}, 89\penalty0 (4):\penalty0 641--646,
  1983.

\bibitem[Perthame and Shu(1996)]{Perthame_Shu_1996}
B.~Perthame and C.-W. Shu.
\newblock On positivity preserving finite volume schemes for {E}uler equations.
\newblock \emph{Numer. Math.}, 73\penalty0 (1):\penalty0 119--130, 1996.

\bibitem[Smoller(1983)]{Smoller_1983}
J.~Smoller.
\newblock \emph{Shock waves and reaction-diffusion equations}, volume 258 of
  \emph{Grundlehren der Mathematischen Wissenschaften [Fundamental Principles
  of Mathematical Science]}.
\newblock Springer-Verlag, New York-Berlin, 1983.

\bibitem[Toro(2009)]{Toro_2009}
E.~F. Toro.
\newblock \emph{Riemann solvers and numerical methods for fluid dynamics}.
\newblock Springer-Verlag, Berlin, third edition, 2009.
\newblock A practical introduction.

\bibitem[Young(2002)]{Young_2002}
R.~Young.
\newblock The {$p$}-system. {I}. {T}he {R}iemann problem.
\newblock In \emph{The legacy of the inverse scattering transform in applied
  mathematics ({S}outh {H}adley, {MA}, 2001)}, volume 301 of \emph{Contemp.
  Math.}, pages 219--234. Amer. Math. Soc., Providence, RI, 2002.

\end{thebibliography}

\end{document}